\newtheorem{theorem}{Theorem}[section]
\theoremstyle{plain}
\newtheorem{corollary}[theorem]{Corollary}
\newtheorem{example}{Example}
\newtheorem{lemma}[theorem]{Lemma}
\newtheorem{problem}{Problem}[section]
\numberwithin{equation}{section}
\newcommand{\Zfrak}{\mathfrak{Z}}
\newcommand{\Mfrak}{\mathfrak{M}}
\newcommand{\Nfrak}{\mathfrak{N}}
\newcommand{\Dfrak}{\mathfrak{D}}
\newcommand{\Acal}{\mathcal{A}}
\newcommand{\Hcal}{\mathcal{H}}
\newcommand{\Mcal}{\mathcal{M}}
\newcommand{\Ncal}{\mathcal{N}}
\newcommand{\Ucal}{\mathcal{U}}
\newcommand{\Z}{\mathbb{Z}}
\newcommand{\C}{\mathbb{C}}
\newcommand{\F}{\mathbb{F}}
\newcommand{\Q}{\mathbb{Q}}
\newcommand{\R}{\mathbb{R}}
\newcommand{\N}{\mathbb{N}}
\newcommand{\A}{\mathbb{A}}
\newcommand{\G}{\mathbb{G}}
\newcommand{\cha}{\mathrm{char}\,}
\newcommand{\Aut}{\mathrm{Aut}}
\begin{document}
\title[Entire functions of positive characteristic]{Uniform positive existential interpretation of the integers in rings of entire functions of positive characteristic}
\author{Natalia Garcia-Fritz}
\address{Department of Mathematics and Statistics, Queen's University \newline%
\indent Jeffery Hall, University ave.\newline
\indent Kingston, ON Canada, K7L 3N6}%
\email[N. Garcia-Fritz]{ngarciafritz@gmail.com}%
\author{Hector Pasten}
\address{Department of Mathematics, Harvard University \newline%
\indent 1 Oxford Street,\newline
\indent Cambridge, MA 02138 USA}%
\email[H. Pasten]{hpasten@gmail.com}%
\thanks{The first author was partially supported by a Becas Chile Scholarship}
\thanks{The second author was partially supported by an Ontario Graduate Scholarship and by a Benjamin Peirce Fellowship.}
\date{\today}
\subjclass[2010]{Primary 11U05; Secondary 30G06, 12L05} %
\keywords{Hilbert's tenth problem; analytic functions; Pell equation; positive characteristic; uniform}%

\begin{abstract}
We prove a negative solution to the analogue of Hilbert's tenth problem for rings of one variable non-Archimedean entire functions in any characteristic. In the positive characteristic case we prove more: the ring of rational integers is uniformly positive existentially interpretable in the class of $\{0,1,t,+,\cdot,=\}$-structures consisting of positive characteristic rings of entire functions on the variable $t$. From this we deduce uniform undecidability results for the positive existential theory of such structures. As a key intermediate step, we prove a rationality result for the solutions of certain Pell equation (which a priori could be transcendental entire functions). 
\end{abstract}

\maketitle


\section{Introduction and results}\label{SecIntro}

In this work we establish a negative solution to the analogue of Hilbert's tenth problem for rings of one variable non-Archimedean entire functions in any characteristic (such a result for one variable non-Archimedean entire functions was only known in characteristic zero, see \cite{LipPhe}). Moreover, we prove that the ring $\Z$ is \emph{uniformly} positive existentially interpretable in the class of rings of one variable entire functions in positive characteristic, which leads to uniform undecidability results in the positive characteristic case (extending some results from \cite{PPV}). Let us introduce some notation before stating our results in a precise way.

Let $R$ be an integral domain endowed with an absolute value $|\cdot|$. We denote by $\Acal_R$ entire functions defined over $R$, by which we mean the ring of power series in the variable $t$ with coefficients in $R$ and infinite radius of convergence. More precisely, when $|\cdot|$ is Archimedean this ring is defined as
$$
\Acal_R=\left\{\sum_{n\ge 0}c_n t^n : \forall r\in\R^+, \lim_{M\to\infty}\sum_{n\ge M}|c_n|r^n=0\right\} 
$$
while when $|\cdot|$ is non-Archimedean this ring is defined as
$$
\Acal_R=\left\{\sum_{n\ge 0}c_n t^n : \forall r\in\R^+, \lim_{M\to \infty} |c_M|r^M=0\right\}.
$$
We remark that the elements of $\Acal_R$ define (by evaluation) functions $R\to R$ when $R$ is complete, but not in general. Also, there is a slight abuse of notation (the absolute value is not explicit in $\Acal_R$) but this should not lead to any confusion in the present work.

For different choices of $R$ one recovers some familiar examples of $\Acal_R$. For instance, $\Acal_{\C}$ is the ring $\Hcal$ of holomorphic functions on $\C$, while $\Acal_{\C_p}$ is the ring of analytic functions $\Hcal_p$ on $\C_p$ (as usual, $\C_p$ denotes the completion of the algebraic closure of $\Q_p$). Also, observe that any ring $R$ can be given the trivial absolute value, in which case $\Acal_R=R[t]$. The next example shows that when the absolute value is non-trivial, $\Acal_R$ can contain $R[t]$ strictly, even in positive characteristic (which is the most relevant case for our purposes).

\begin{example} Let $p$ be a prime, and let $R=\mathbb{F}_p[[q]]$ where $q$ is a variable. We denote by $v:R\to\mathbb{Z}\cup\{\infty\}$ the valuation defined by the order at $q$, and consider the absolute value defined for $f\in R$ as $|f|=p^{-v(f)}$. The power series
$$
\Theta(t)=\sum_{n=1}^\infty q^{n^2}t^n\in R[[t]]
$$
is an element of $\Acal_R$ and it is not in $R[t]$. Incidentally, note that $\Theta(t)$ is essentially the same as the (formal) reduction modulo $p$ Jacobi's theta function
$$
\vartheta(q,t):=\sum_{n\in\mathbb{Z}} q^{n^2}t^n = \Theta(t)+1+\Theta(1/t).
$$
\end{example}

The arithmetic of $\Acal_R$ shares many similarities with the arithmetic of $\Z$. This analogy is classically known in the case of $\Acal_{\F_q}=\F_q[t]$. For $R=\C$, and for non-Archimedean (complete, algebraically closed) fields $R$ in any characteristic with non-trivial absolute value, this analogy is the object of much of current research in the context of Nevanlinna theory and Vojta's analogy in both the complex \cite{VojtaCIME} and the non-Archimedean cases \cite{Wangetal, HuYang}. Therefore, in view of the negative answer to Hilbert's tenth problem on $\Z$, it is natural to ask whether the solvability of Diophantine equations over $\Acal_R$ is decidable or not. More precisely, if we consider the language $L_t=\{0,1,t,+,\cdot,=\}$ and regard $\Acal_R$ as an $L_t$-structure in the obvious way, then one can formulate the following problem.

\begin{problem}\label{ProblemH10Ana}
Let $R$ be an integral domain endowed with an absolute value $|\cdot |$. Is the positive existential theory of $\Acal_R$ over $L_t$ decidable?
\end{problem}

See \cite{PhZaSurvey} for a reference on Hilbert's tenth problem over other rings of functions (specially Sections 5-9 for rings of analytic functions), and see Section \ref{SecOpen} below for some related open problems. We remark that working over the language $L_t$ corresponds to considering Diophantine equations with coefficients in $R'[t]\subseteq \Acal_R$ where $R'$ is the image of $\Z$ in $R\subseteq \Acal_R$; this is a natural setting for Hilbert's tenth problem over rings of functions on the variable $t$ because $R'[t]$ is not `too large' meaning that it is a recursive ring (in particular, the decidability problem for equations with coefficients in $R'[t]$ makes sense) and it is not `too small' meaning that it contains elements of $\Acal_R$ that are transcendental over $R$. This last point is relevant because polynomial equations with coefficients in $R'$ (or even in $R$) have a solution in $\Acal_R$ if and only if they have a solution in $R$ (indeed, any solution in $R$ is also in $\Acal_R$, and any solution in $\Acal_R$ gives a solution in $R$ by setting $t=0$), which in many natural cases leads to a decidable Diophantine problem (for instance, if $R$ is an algebraically closed field).  We also mention that the decidability problem of existence of solutions for for \emph{single} Diophantine equations with coefficients in $R'[t]$ or for \emph{systems} of Diophantine equations with coefficients in $R'[t]$ (which is equivalent to Problem \ref{ProblemH10Ana}) are indeed equivalent, because one can verify that the fraction field of $\Acal_R$ is not algebraically closed (see Lemma 1.6 in \cite{PhZaSurvey}).

Let us briefly recall what is known for Problem \ref{ProblemH10Ana}.

One can consider two cases: the absolute value is Archimedean, or it is non-Archimedean. In the first case the problem remains wide open (despite the efforts in \cite{PheidasHolo} which contains an unfortunate mistake, see \cite{PhZaSurvey}). In the second case, Denef has proved \cite{Denef0,Denefp} that the Diophantine problem of polynomial rings in one variable over an integral domain on the language $L_t$ is undecidable; this covers the case when $|\cdot |$ is trivial. Lipshitz and Pheidas \cite{LipPhe} proved that the Diophantine problem of $\Hcal_p$ on the language $L_t$ is undecidable, while no result in positive characteristic has been established when $|\cdot |$ is non-trivial. 

Our first result covers all cases when the absolute value is non-Archimedean.

\begin{theorem}\label{MainThm}
Let $R$ be an integral domain endowed with a non-Archimedean absolute value $|\cdot |$. The positive existential theory of $\Acal_R$ in the language $L_t$ is undecidable.
\end{theorem}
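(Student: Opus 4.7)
The plan is to positive-existentially interpret $\Z$ inside $\Acal_R$ using only the language $L_t$, and then invoke Matiyasevich's theorem. The interpretation follows Denef's classical Pell equation blueprint, adapted to the setting of non-Archimedean entire functions. Concretely, I would work with the Pell equation
\[
X^2 - (t^2 - 1) Y^2 = 1,
\]
whose solutions over $R'[t]$ (where $R'$ is the image of $\Z$ in $R$) are exactly the pairs $(\pm P_n, \pm Q_n)$ with $P_n + Q_n \sqrt{t^2-1} = (t + \sqrt{t^2-1})^n$ for $n \in \Z$. These Chebyshev-type polynomials encode $\Z$ via $n \mapsto (P_n, Q_n)$, and this set of solutions is positive-existentially definable in $\Acal_R$ simply by writing down the equation.

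The first, and main, step is a rationality theorem: any solution $(X, Y) \in \Acal_R^2$ of the Pell equation must satisfy $X, Y \in R[t]$. A priori, transcendental entire solutions could exist and spoil the encoding. To exclude them, I would reduce to the case of a complete algebraically closed $R$, and factor
\[
(X + Y\sqrt{t^2-1})(X - Y\sqrt{t^2-1}) = 1
\]
inside an algebraic extension of $\Acal_R$; a convenient device is the substitution $t = (s + s^{-1})/2$, which turns $\sqrt{t^2-1}$ into $(s - s^{-1})/2$ and makes the two factors honest analytic objects. The ultrametric Gauss norms $\|f\|_r = \sup_n |c_n| r^n$ on $\Acal_R$, together with the fact that in the non-Archimedean setting entire functions without zeros are constants, force the product decomposition $fg = 1$ to imply polynomial bounds on the growth of $X$ and $Y$ as $r \to \infty$, so that $X, Y \in R[t]$. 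In characteristic $p$, Frobenius produces formal families of candidate solutions that must be ruled out by a sharper growth or valuation estimate; this is the step I expect to be the main technical obstacle.

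Granted the rationality theorem, Step 2 is essentially bookkeeping following Denef. The set of Pell pairs in $\Acal_R$ is positive-existentially definable (as the zero set of the Pell equation) and stands in bijection with $\Z$ (modulo signs). Arithmetic on indices is recovered positive-existentially via the multiplicativity of $t + \sqrt{t^2-1}$, which translates into polynomial identities among the $P_n$ and $Q_n$ expressible over $L_t$. Individual integers can then be pinned down by positive-existential conditions coming from the substitutions/evaluations that are available once $t$ is in the language (for instance, using divisibility by $t$, which is positive-existentially definable as multiplication by $t$). Assembling these ingredients produces a positive-existential interpretation of $\Z$ in $\Acal_R$ over $L_t$.

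The conclusion then follows: by Matiyasevich's theorem the positive existential theory of $\Z$ is undecidable, so through the interpretation the positive existential theory of $\Acal_R$ over $L_t$ is undecidable as well. In summary, the entire theorem rests on the Pell rationality result, with the surrounding interpretation being a standard Denef-type construction; accordingly, I expect essentially all of the technical work to lie in proving that transcendental non-Archimedean entire functions cannot satisfy the Pell equation, especially in characteristic $p$.
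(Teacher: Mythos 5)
Your plan correctly identifies the key analytic ingredient (that every $\Acal_R$-solution of $X^2-(t^2-1)Y^2=1$ is in fact a polynomial), and your sketch of how to prove it is essentially the paper's: after completing and algebraically closing $R$, the substitution hiding behind $t=(z+w)/2$, $\sqrt{t^2-1}=(z-w)/2$ identifies $\Acal_R[\sqrt{t^2-1}]$ with a quotient of $R\{z,w\}$ that maps into the Laurent-entire ring $\Acal_R(\G_m)$, where the norm equation $\omega\cdot\tau(\omega)=1$ is analyzed via Gauss norms / Newton polygons. This part of your proposal is sound, and characteristic $p$ causes no extra trouble here (your worry about Frobenius is misplaced for this particular step).

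The gap is in what you call ``Step 2 is essentially bookkeeping following Denef.'' That is true in characteristic $0$: there Denef's argument only needs Pell solutions for the single equation $X^2-(t^2-1)Y^2=1$, and integers are recovered as $y_n(1)=n$. But in characteristic $p>0$, evaluation at $t=1$ only yields $n\bmod p$, so Denef's positive-characteristic interpretation instead works with the family of Pell equations $X^2-(a^2-1)Y^2=1$ for \emph{non-constant polynomials} $a$ (not just $a=t$), and uses the Frobenius relation $y_{mp^r}=y_m^{p^r}$ to climb past this obstruction. The rationality theorem you (correctly) single out does not say anything about analytic solutions of $X^2-(a^2-1)Y^2=1$ for general non-constant $a\in R[t]$, and the Newton-polygon proof does not obviously extend to it; the paper explicitly records this as an open problem (Problem~\ref{ProblemPell}). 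So ``following Denef'' does not go through as stated. The paper circumvents this in its fixed-characteristic argument (Theorem~\ref{ThmCharp}) by instead giving a positive existential definition of the set $\{t,t^{p^2},t^{p^3},\dots\}$ using only the Pell equations at $a=t$ and $a=t+1$ (the latter reached by the automorphism $t\mapsto t+1$), and then invoking Pheidas' theorem rather than Denef's. You would need to supply this additional step or an equivalent substitute before the reduction to Matiyasevich is complete in positive characteristic.
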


This theorem is proved in Section \ref{SecFixedChar}. 

It is not incorrect to say that the same proof of \cite{LipPhe} works whenever $\cha R=0$ and $|\cdot |$ is non-Archimedean, not just for $\C_p$ (see also Theorem \ref{ThmChar0}). Therefore, our main contribution is the case of positive characteristic. Actually, the next theorem (which is our main theorem) gives a much more precise result for positive characteristic (see Section \ref{SecPPV} for a brief review of uniform interpretability).

\begin{theorem}\label{MainUnif}
The ring of rational integers $(\Z;0,1,+,\cdot,=)$ is uniformly positive existentially interpretable in the class of $L_t$-structures consisting of all $\Acal_R$, as $R$ ranges over integral domains of positive characteristic with a non-Archimedean absolute value. 
\end{theorem}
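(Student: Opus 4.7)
The plan is to use a Pell-type equation such as $X^2-(t^2-1)Y^2=1$ in $\Acal_R^2$ (with a parallel equation adapted to handle characteristic $2$, where the displayed equation degenerates) as the main tool for extracting polynomials out of $\Acal_R$ positive-existentially. Over $R[t]$, the solutions of $X^2-(t^2-1)Y^2=1$ are exactly the Chebyshev pairs $(\pm T_n(t),\pm U_{n-1}(t))$ with $n\ge 0$; but $\Acal_R$ could a priori contain transcendental entire solutions. The centerpiece of the proof is to rule this out, yielding the rationality statement advertised in the abstract. Once this is established, each Pell solution pinpoints an integer $n\ge 0$, and the classical Chebyshev identities $T_m\circ T_n = T_{mn}$ and $T_{m+n}+T_{|m-n|}=2T_m T_n$ let me encode multiplication and addition of natural numbers inside the positive-existentially defined set of first coordinates of Pell solutions. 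This yields a uniform positive-existential interpretation of $\N$ and hence of $\Z$, with the same $L_t$-formulas used across all the $\Acal_R$.

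To prove the rationality statement I would first reduce to the classical setting of a complete algebraically closed non-Archimedean field. Given $(X,Y)\in \Acal_R^2$ satisfying the Pell equation, I would base change to the completion $K$ of the algebraic closure of $\mathrm{Frac}(R)$ with respect to an extension of the absolute value. This sends $X$ and $Y$ to elements of $\Acal_K$ that still satisfy the Pell equation, because infinite radius of convergence is preserved under the base change. In $\Acal_K$ I would apply non-Archimedean Nevanlinna theory: factoring $(X-1)(X+1)=(t-1)(t+1)Y^2$ and using the non-Archimedean Jensen formula to count zeros on disks of radius $r$, one shows that the Nevanlinna characteristic function $T(r,X)$ grows like $O(\log r)$, which forces $X$ to be a polynomial; then $Y$ is polynomial too by the Pell relation. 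Returning to $\Acal_R$ is automatic, because being a polynomial can be tested coefficient by coefficient, and coefficients in $K$ that come from $R$ stay in $R$. The characteristic of $R$ plays no role in this estimate, preserving uniformity.

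The main obstacle is the rationality statement itself. The growth estimate over a complete algebraically closed non-Archimedean field is not the issue; the delicate part is managing the generality in the hypothesis of the theorem, namely arbitrary integral domains $R$ of positive characteristic with arbitrary non-Archimedean absolute values (possibly non-complete, possibly with degenerate valuation, and not necessarily a field). This is handled by careful base change and by checking that Pell solutions descend properly. The characteristic-$2$ case will need a parallel Pell-type equation, combined uniformly with the generic one using that over an integral domain the disjunction $p=0 \lor q=0$ is equivalent to $pq=0$ at the level of atomic equations, so that the final interpretation is written once in $L_t$ without any reference to the characteristic.
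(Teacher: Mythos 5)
Your plan for the rationality statement (ruling out transcendental entire Pell solutions) is plausible in outline, and although the paper proves it by a very different, purely algebraic route (an isomorphism of $R\{t,u\}/(t^2-u^2-1)R\{t,u\}$ with $\Acal_R(\G_m)$ plus Newton-polygon arguments, with no zero counting at all), a Nevanlinna-style argument could in principle also do this. The real problem is what you do \emph{after} the rationality result.

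You propose to encode multiplication via the Chebyshev composition identity $T_m\circ T_n=T_{mn}$, i.e., via $x_{mn}=x_m(x_n(t))$. To check positive-existentially that a given $f$ equals $x_m(x_n(t))$ you would pose the Pell equation $X^2-(x_n^2-1)Y^2=1$ and invoke the classification of its $\Acal_R$-solutions; but this is a \emph{general} Pell equation with parameter $a=x_n$, a non-constant polynomial of degree $|n|$, not the single fixed equation $a=t$ that your rationality result handles. The rationality statement you sketch (and your Nevanlinna factorization $(X-1)(X+1)=(t-1)(t+1)Y^2$) is specific to $a=t$; for arbitrary non-constant $a\in\Acal_R$ the question is open (this is Problem \ref{ProblemPell} in the paper, explicitly left unresolved, and the introduction stresses that precisely for this reason Denef's positive-characteristic argument cannot be transplanted wholesale to $\Acal_R$). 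This is why the paper does not go through Chebyshev composition at all: instead it defines the $p^r$-th power relation $\ge_p$ uniformly via B\"uchi's $n$ squares problem (Theorem \ref{DefBuchi}), uses this plus Pell addition to interpret the auxiliary structure $\Zfrak_p=(\Z;0,1,+,|,|^p,\ne)$ (Lemma \ref{MainLemma}), and then relies on the arithmetical result from \cite{PPV} that $(\Z;0,1,+,\cdot,=)$ is uniformly interpretable in $\Zfrak_p$; this only works for $p\ge 17$ (the B\"uchi bound), and the finitely many small characteristics are patched in separately via Pheidas' theorem from \cite{PheidasII}. None of this machinery appears in your proposal. Relatedly, you do not address where multiplication of the interpreted integers actually comes from: in characteristic zero one cheats by evaluating $y_n$ at $t=1$ to recover $n$ and then multiplies in the ring, but in characteristic $p$ this evaluation only gives $n\bmod p$, so this route is also closed. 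As it stands, your Step 2 (interpreting $\cdot$) has a genuine gap that the single Pell rationality result cannot fill.
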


This result is proved in Section \ref{SecFinal}.

Theorem \ref{MainUnif} extends part of the main results of \cite{PPV} (namely, Theorem 1.1, Item 2) from the polynomial case to the analytic case of entire functions, because the latter contains the polynomial case by using trivial absolute values. 

As in \cite{PPV}, one can deduce strong undecidability results from uniform interpretability. Since the positive existential theory of the ring $\Z$ is undecidable (by Matijasevic, after Davis, Putnam and Robinson \cite{Matijasevich}), the following uniform undecidability result follows (see Corollary \ref{GeneralUnifUndec}).

\begin{theorem} The following problems are undecidable:

Let $C$ be a given non-empty collection of rings $R$ of positive characteristic with a non-Archimedean absolute value. Given a closed positive existential $L_t$-formula $\phi$, decide whether or not $\Acal_R$ satisfies $\phi$ for
\begin{itemize}
\item each $R\in C$;
\item at least one $R\in C$;
\item all but finitely many $R\in C$ (assuming that $C$ is infinite);
\item infinitely many $R\in C$ (assuming that $C$ is infinite).
\end{itemize}
\end{theorem}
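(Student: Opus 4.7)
The plan is to deduce this corollary from Theorem \ref{MainUnif} by a standard reduction from Matiyasevich's theorem. The crucial point is that a uniform positive existential interpretation of $\Z$ in the class $\{\Acal_R\}$ supplies a \emph{fixed}, finite list of positive existential $L_t$-formulas (a domain formula, an equivalence relation formula, and formulas for $0$, $1$, $+$, $\cdot$) which simultaneously interpret $(\Z;0,1,+,\cdot,=)$ inside every $\Acal_R$ in the class, independently of $R$.

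Using this fixed interpretation, I would construct a computable translation $\psi\mapsto \phi_\psi$ sending each closed positive existential formula $\psi$ in the language of rings to a closed positive existential $L_t$-formula $\phi_\psi$: replace every variable by a tuple of variables constrained (via existential quantifiers and the domain formula) to lie in the interpreted copy of $\Z$, rewrite each atomic subformula through the interpretation formulas, and absorb the resulting equivalences by the equivalence relation of the interpretation. Because the interpretation is positive existential, the translation stays within the positive existential fragment; because it is uniform, the same formula $\phi_\psi$ works for every $R\in C$, and one has the equivalence
$$
\Acal_R\models \phi_\psi \;\Longleftrightarrow\; \Z\models\psi \quad\text{for every }R\in C.
$$

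The four decision problems in the theorem then all collapse onto the same question. Indeed, the truth value of $\phi_\psi$ in $\Acal_R$ is independent of $R$, so $\phi_\psi$ holds in \emph{every} $\Acal_R$ with $R\in C$, in \emph{some} such $R$, in all but finitely many such $R$, or in infinitely many such $R$, if and only if $\Z\models\psi$ (when $C$ is infinite the last two conditions make sense, but they still coincide with the first two). Since $\psi\mapsto\phi_\psi$ is computable and the positive existential theory of $(\Z;0,1,+,\cdot,=)$ is undecidable by the Matiyasevich--Davis--Putnam--Robinson theorem, this yields a many-one reduction from the Diophantine problem over $\Z$ to each of the four decision problems, proving all of them undecidable.

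There is no genuine obstacle at this stage: the entire difficulty has been packaged into Theorem \ref{MainUnif}. The only point requiring mild care is the verification that uniformity really forces the truth of $\phi_\psi$ to be constant on $C$, which is precisely what guarantees that the four a priori different problems become equivalent. I would therefore present this as a short formal corollary (presumably labeled \texttt{GeneralUnifUndec}), perhaps preceded by a brief subsection recalling the bookkeeping of uniform positive existential interpretations so that the translation $\psi\mapsto \phi_\psi$ can be cited rather than redeveloped.
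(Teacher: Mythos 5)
Your proposal is correct and matches the paper's approach exactly: the result is the direct combination of Theorem \ref{MainUnif} with the general reduction (Corollary \ref{GeneralUnifUndec}, itself a consequence of the translation machinery for uniform interpretations laid out in Section \ref{SecPPV}) and Matiyasevich's theorem, with the key observation being precisely the one you isolate, that uniformity makes the truth value of the translated formula constant over $C$ and hence collapses the four decision problems.
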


In particular, when $C$ is a singleton we recover Theorem \ref{MainThm} in the positive characteristic case, and when $C$ only contains positive characteristic rings with the trivial absolute value we recover Item 2 of Theorem 1.2 \cite{PPV}. 

Although it is equivalent, it might be convenient to reformulate the previous theorem in terms of systems of equations instead of $L_t$-formulas.

\begin{theorem} The following problems are undecidable:

Let $C$ be a given non-empty collection of rings $R$ of positive characteristic with a non-Archimedean absolute value. Given a system $S$ of polynomial equations with coefficients in $\Z[t]$, decide whether or not $S$ has solutions in $\Acal_R$ (after reducing the coefficients modulo $\cha R$) for
\begin{itemize}
\item each $R\in C$;
\item at least one $R\in C$;
\item all but finitely many $R\in C$ (assuming that $C$ is infinite);
\item infinitely many $R\in C$ (assuming that $C$ is infinite).
\end{itemize}
\end{theorem}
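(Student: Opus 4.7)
The plan is to deduce this statement from the previous theorem by providing a computable, two-sided translation between finite systems $S$ of polynomial equations with coefficients in $\Z[t]$ and closed positive existential $L_t$-formulas $\phi$, in such a way that $S$ (after reducing coefficients modulo $\cha R$) has a solution in $\Acal_R$ if and only if $\Acal_R$ satisfies $\phi$. Once such a translation is established, each of the four undecidability statements for formulas transfers immediately to the corresponding statement for systems.

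For the direction from systems to formulas, I would observe that every element of $\Z[t][X_1,\ldots,X_n]$ is obtained from $0$, $1$, $t$, $X_1,\ldots,X_n$ by iterated use of $+$ and $\cdot$, so each equation $P_i=0$ in $S$ is naturally an atomic $L_t$-formula $\tau_i=0$ with free variables among the $X_j$. Setting
\[
\phi_S \;:=\; \exists X_1\cdots\exists X_n\; \bigwedge_{i}(\tau_i=0),
\]
we obtain a closed positive existential $L_t$-formula; because $\Acal_R$ interprets the integer $m$ as $m\cdot 1_R$, the truth of $\phi_S$ in $\Acal_R$ matches the solvability in $\Acal_R$ of the reduction of $S$ modulo $\cha R$, and the construction $S\mapsto \phi_S$ is evidently algorithmic.

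For the other direction, I would start by writing $\phi = \exists X_1\cdots\exists X_m\,\psi$ where $\psi$ is a finite disjunction of finite conjunctions of atomic formulas, each of which can be put in the form $Q=0$ with $Q\in\Z[t][X_1,\ldots,X_m]$ using the ring operations. The key step, and the only ingredient beyond syntactic manipulation, is that $\Acal_R$ is an integral domain (as $R$ is), so a disjunction $Q_1=0\,\vee\, Q_2=0$ is equivalent in $\Acal_R$ to the single equation $Q_1Q_2=0$. Iterating collapses $\psi$ to a single finite conjunction of polynomial equations, yielding an algorithmically computed system $S_\phi$ equivalent to $\phi$ over $\Acal_R$. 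I do not expect a genuine obstacle here: all of the undecidability content has already been supplied by the previous theorem, and the role of the present statement is only to repackage it in algebraic language, the sole nontrivial input being the integral-domain property of $\Acal_R$ used to eliminate disjunctions.
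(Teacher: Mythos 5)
Your overall plan is the right one and corresponds to how the paper treats this statement: the paper gives no separate proof, remarking only that the system version ``is equivalent'' to the formula version, and relies on exactly the routine bridge you describe (it cites the stronger fact that the fraction field of $\Acal_R$ is not algebraically closed, via Lemma 1.6 of Pheidas--Zahidi, but that is only needed to pass from systems to a \emph{single} equation; for systems, the integral-domain property suffices, as you use). The direction from systems to formulas is correct as you wrote it.

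For the formula-to-system direction there is a small but genuine slip. After prenexing you put the quantifier-free positive part $\psi$ in \emph{disjunctive} normal form $\bigvee_j\bigwedge_i(Q_{ij}=0)$, and then propose to apply the rule $Q_1=0\vee Q_2=0\ \Leftrightarrow\ Q_1Q_2=0$. But that rule applies to a disjunction of \emph{atomic} formulas, not to a disjunction of conjunctions: you cannot multiply the two disjuncts $\bigwedge_i(Q_{i1}=0)$ and $\bigwedge_i(Q_{i2}=0)$ into a single equation without extra input. The fix is to use \emph{conjunctive} normal form instead: write $\psi$ as $\bigwedge_j\bigvee_i(Q_{ij}=0)$, collapse each clause to $\prod_i Q_{ij}=0$ by the integral-domain rule, and obtain the system $\bigwedge_j\bigl(\prod_i Q_{ij}=0\bigr)$. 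Equivalently, build the system recursively: $\wedge$ is concatenation of systems, and $\{P_i=0\}_i\vee\{Q_j=0\}_j$ becomes $\{P_iQ_j=0\}_{i,j}$, which is just the distributive law converting DNF to CNF. With this correction your argument is complete, algorithmic, and uniform in $R$, so each of the four decision problems transfers as you claim.
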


As pointed out in \cite{PPV}, results of this type are in strong contrast with results such as Ax's theorem \cite{Ax} about the existence of an algorithm that decides the solvability modulo every prime of systems of Diophantine equations with coefficients in $\mathbb{Z}$.
 

Let us briefly describe the plan of the proof. As a starting point, we prove that the entire solutions of a certain Pell equation $X^2-(t^2-1)Y^2=1$ are actually polynomials (see Theorem \ref{ThmPell} -- this result can be of independent interest, and the analogue for complex entire functions is false). Note that the characteristic zero results of \cite{LipPhe} are also based on this fact (proved in \cite{LipPhe} for characteristic zero), however, our analysis of the Pell equation is substantially different and works in positive characteristic. Unlike the characteristic zero case \cite{LipPhe}, the previously mentioned result on the equation $X^2-(t^2-1)Y^2=1$ is not enough for invoking Denef's proofs from \cite{Denef0,Denefp} since the positive characteristic case would require us to analyze a more general Pell equation which seems to be beyond our methods, so, this idea does not work directly. Instead, we use our result on Pell equations to define a suitable set, which allows us to apply the main result of Pheidas \cite{PheidasII} (see Theorem \ref{ThmCharp}). Unfortunately, Pheidas' result from \cite{PheidasII} is not uniform in the characteristic and more work is needed for proving Theorem \ref{MainUnif}. In order to obtain uniformity, we go back to the idea of applying Denef's proof from \cite{Denefp} and replace the use of more general Pell equations by an application of B\"uchi's problem for polynomials in positive characteristic (see Section \ref{SecBuchi} for the latter). However, this approach only works for characteristic $p\ge 17$ (Theorem \ref{ThmAlmost}). Actually this is not a serious problem since we can cover the finitely many remaining characteristics using Theorem \ref{ThmCharp} (the application of Pheidas' results \cite{PheidasII} discussed above).


\section{Uniform interpretation}\label{SecPPV}

In this section we briefly recall the notion of uniform interpretation. For a more detailed discussion on the material in this section, the reader can consult \cite{PPV}.

Given a language $L$ and an $L$-structure $\Mfrak$ we will denote the base set of $\Mfrak$ by $|\Mfrak|$. 

The language $L$ has three types of symbols; constant, relation and function symbols. We recall the notion of \emph{realization} of a formula. If $c\in L$ is a constant symbol, then we denote its realization by $c^{\Mfrak}$ which is a singleton subset of $|\Mfrak|$ whose only element is the interpretation of $c$. If $R\in L$ is an $n$-ary relation symbol, its realization $R^{\Mfrak}$ is the subset of $|\Mfrak|^n$ given by the interpretation of $R$.  If $f\in L$ is an $n$-ary function symbol, its realization $f^\Mfrak$ is the subset of $|\Mfrak|^{n+1}$ given by the graph of the interpretation of $f$. In general, the realization of an $L$-formula $\phi=\phi(x_1,\ldots,x_n)$ is
$$
\phi^\Mfrak=\{(m_1,\ldots, m_n)\in |\Mfrak|^n : \Mfrak \mbox{ satisfies }\phi(m_1,\ldots,m_n)\}.
$$

Let $L, L'$ be languages, $\Mfrak$ an $L$-structure and $\Ncal$ a class of $L'$-structures (which we always assume non-empty). We say that $\Mfrak$ is \emph{uniformly interpretable} (without parameters) in the class $\Ncal$ if there is an $L'$-formula $\phi_L$, and for each symbol $s\in L$ there is an $L'$-formula $\phi_s$ such that, for each $\Nfrak\in\Ncal$ there is a surjective map
$$
\theta_\Nfrak :\phi_L^{\Nfrak}\to |\Mfrak|
$$
which satisfies the following conditions:
\begin{itemize}
\item $\phi_c^\Nfrak\subseteq \phi_L^{\Nfrak}$ and $\theta_\Nfrak^{-1}(c^{\Mfrak})=\phi_c^{\Nfrak}$ for each constant symbol $c$,
\item $\phi_R^\Nfrak\subseteq (\phi_L^{\Nfrak})^n$ and $(\theta_\Nfrak^n)^{-1}(R^{\Mfrak})=\phi_R^{\Nfrak}$ for each $n$-ary relation symbol $R$, and
\item $\phi_f^\Nfrak\subseteq (\phi_L^{\Nfrak})^{n+1}$ and $(\theta_\Nfrak^{n+1})^{-1}(f^{\Mfrak})=\phi_f^{\Nfrak}$ for each symbol of $n$-ary function.
\end{itemize}
One can explicitly refer to these formulas by saying that $\Mfrak$ is uniformly interpretable in $\Ncal$ \emph{by the set of formulas $\Phi=\{\phi_s\}_{s\in L\cup\{L\}}$}. We say that this interpretation is \emph{positive existential} if each formula in $\Phi$ is positive existential.

One of the main applications of uniform interpretations is related to \emph{uniform undecidability} results (here we need to assume that the languages are encoded in the integers; this is not a problem since in our applications we only consider finite languages).

\begin{theorem}
Suppose that the $L$-structure $\Mfrak$ is uniformly interpretable in a class of $L'$-structures $\Ncal$. There is a Turing machine $T$ that takes as input closed $L$-formulas $\phi$ and gives as output closed $L'$-formulas $T(\phi)$ such that the following three items are equivalent:
\begin{itemize}
\item $\Mfrak$ satisfies $\phi$, 
\item there is some $\Nfrak$ in $\Ncal$ such that $\Nfrak$ satisfies $T(\phi)$,
\item for all $\Nfrak$ in $\Ncal$ we have that $\Nfrak$ satisfies $T(\phi)$.
\end{itemize}
Moreover, if the uniform interpretation is positive existential, then $T$ takes positive existential $L$-formulas to positive existential $L'$-formulas.
\end{theorem}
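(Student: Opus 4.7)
The plan is to construct $T$ by structural recursion on $L$-formulas, turning each $L$-formula $\phi(x_1,\ldots,x_m)$ into an $L'$-formula $T(\phi)(\bar x_1,\ldots,\bar x_m)$ that, when interpreted in any $\Nfrak\in\Ncal$, faithfully reflects satisfaction of $\phi$ in $\Mfrak$ through the interpretation. Fix the witnessing family $\Phi=\{\phi_s\}_{s\in L\cup\{L\}}$, and let $k$ be the number of free variables of $\phi_L$, so that elements of $|\Mfrak|$ are encoded by $k$-tuples in $\phi_L^\Nfrak$. Each $L$-variable $x$ is replaced, throughout the translation, by a fresh block $\bar x$ of $k$ new $L'$-variables.

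The recursion rules are the standard ones for interpretations. First, each atomic $L$-formula is put in flat form by repeatedly replacing a subterm of the form $f(t_1,\ldots,t_n)$ by a fresh variable $\bar y$ existentially quantified together with the conjunct $\phi_f(\bar t_1,\ldots,\bar t_n,\bar y)$, and similarly replacing a constant $c$ by a fresh $\bar z$ satisfying $\phi_c(\bar z)$. A flattened relational atom $R(x_1,\ldots,x_n)$---including the equality relation, which as a symbol of $L$ has its own defining formula $\phi_=$---is then translated to $\phi_R(\bar x_1,\ldots,\bar x_n)$. Boolean connectives are transmitted unchanged, while quantifiers are relativized to $\phi_L$: one sets $T(\exists x\,\psi)=\exists\bar x\,(\phi_L(\bar x)\wedge T(\psi))$ and $T(\forall x\,\psi)=\forall\bar x\,(\phi_L(\bar x)\rightarrow T(\psi))$. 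Since $\Phi$ is a fixed finite piece of data, this procedure is Turing-computable from the input formula.

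Correctness is established by induction on the build-up of $\phi$: for any $L$-formula $\psi(x_1,\ldots,x_m)$, any $\Nfrak\in\Ncal$, and any $\bar b_i\in\phi_L^\Nfrak$ with $a_i:=\theta_\Nfrak(\bar b_i)$, one verifies
$$
\Mfrak\models\psi(a_1,\ldots,a_m)\iff\Nfrak\models T(\psi)(\bar b_1,\ldots,\bar b_m).
$$
The atomic case is exactly the content of the defining equalities $(\theta_\Nfrak^{n})^{-1}(s^\Mfrak)=\phi_s^\Nfrak$ from the definition of uniform interpretation; the Boolean cases are transparent; and the quantifier cases use surjectivity of $\theta_\Nfrak$ onto $|\Mfrak|$. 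Specialised to a closed $\phi$, this gives $\Mfrak\models\phi\iff\Nfrak\models T(\phi)$ uniformly in $\Nfrak\in\Ncal$; since $T(\phi)$ does not depend on $\Nfrak$ and $\Ncal$ is non-empty by assumption, the three bulleted items are mutually equivalent. For the positive existential clause, observe that if every $\phi_s\in\Phi$ is positive existential and $\phi$ itself uses only $\wedge,\vee,\exists$, then only the $\exists$-relativisation rule is invoked, no negations or universal quantifiers are ever introduced, and $T(\phi)$ is positive existential.

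The main obstacle is purely bookkeeping: a single element of $|\Mfrak|$ is represented by a $k$-tuple rather than a single $\Nfrak$-element, $\Mfrak$-equality must be tracked via $\phi_=$ rather than literal $L'$-equality (since $\theta_\Nfrak$ is in general only surjective, not injective), and function and constant symbols of $L$ must be eliminated in favour of their graph formulas before translating. Once these adjustments are in place, the inductive verification proceeds without surprises.
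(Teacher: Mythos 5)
The paper states this theorem without proof, treating it as standard folklore about interpretations and pointing the reader to \cite{PPV} for background on the notion. Your argument is the canonical proof of such a statement---structural recursion on formulas, with variables expanded into blocks, atoms flattened via the graph formulas $\phi_f$ and $\phi_c$, equality routed through $\phi_=$ rather than $L'$-equality, quantifiers relativized to $\phi_L$, and correctness established by induction using surjectivity of $\theta_\Nfrak$ in the quantifier step. This is correct and is precisely what the authors have in mind; the final observation that nonemptiness of $\Ncal$ together with $\Nfrak$-independence of $T(\phi)$ collapses the existential and universal quantification over $\Ncal$ into the same condition, and that the positive existential fragment is closed under the construction when the $\phi_s$ are positive existential, completes the verification. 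There is no divergence from the paper to report, since the paper supplies none.
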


\begin{corollary}\label{GeneralUnifUndec} Suppose that the $L$-structure $\Mfrak$ is uniformly (positive existentially) interpretable in a class of $L'$-structures $\Ncal$. Assume that the (positive existential) theory of $\Mfrak$ over $L$ is undecidable.  Then the following four problems are undecidable:

Let $\Ncal'$ be a non-empty subclass of $\Ncal$. Given a closed (positive existential) $L'$-formula $\phi$, decide whether or not $\Nfrak$ satisfies $\phi$ for
\begin{itemize}
\item at least one $\Nfrak$ in $\Ncal'$;
\item each $\Nfrak$ in $\Ncal'$;
\item all but finitely many $\Nfrak$ in $\Ncal'$ (this requires $\Ncal'$ infinite);
\item infinitely many $\Nfrak$ in $\Ncal'$ (this requires $\Ncal'$ infinite).
\end{itemize}
\end{corollary}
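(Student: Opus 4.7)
The plan is to reduce the (positive existential) theory of $\mathfrak{M}$ over $L$ to each of the four decision problems in the statement, so that the undecidability assumption on $\mathfrak{M}$ transfers to all four cases.

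The preceding theorem supplies a Turing machine $T$ sending any closed (positive existential) $L$-formula $\phi$ to a closed (positive existential) $L'$-formula $T(\phi)$, with the following dichotomy built in: if $\mathfrak{M}\models\phi$ then every $\mathfrak{N}\in\mathcal{N}$ satisfies $T(\phi)$, while if $\mathfrak{M}\not\models\phi$ then no $\mathfrak{N}\in\mathcal{N}$ satisfies $T(\phi)$ (the second implication is obtained by contrapositive from the equivalence of ``$\mathfrak{M}\models\phi$'' with ``some $\mathfrak{N}\in\mathcal{N}$ satisfies $T(\phi)$'' furnished by the theorem). The key point is that this dichotomy descends to any non-empty subclass $\mathcal{N}'\subseteq\mathcal{N}$: in the positive case every $\mathfrak{N}\in\mathcal{N}'$ satisfies $T(\phi)$, and in the negative case none does. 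Hence, for each of the four prefixes ``at least one'', ``each'', ``all but finitely many'' and ``infinitely many'' (the last two requiring $\mathcal{N}'$ infinite), the statement ``$\langle\text{prefix}\rangle\ \mathfrak{N}\in\mathcal{N}'$ satisfies $T(\phi)$'' holds if and only if $\mathfrak{M}\models\phi$.

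With this equivalence in hand, if any one of the four problems about $\mathcal{N}'$ were decidable, composing that decision procedure with the recursive map $\phi\mapsto T(\phi)$ would decide the theory of $\mathfrak{M}$ over $L$, contradicting the hypothesis. The positive existential variant follows verbatim, using that $T$ preserves positive existential status. I do not expect any serious obstacle: the argument is a direct unwinding of the definition of uniform interpretation together with the recursive reduction produced by the previous theorem, with the only mild point being the harmless observation that the ``all but finitely many'' and ``infinitely many'' prefixes are only formulated when $\mathcal{N}'$ is infinite, exactly as in the statement.
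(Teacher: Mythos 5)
Your proof is correct and is exactly the intended deduction from the preceding theorem (the paper gives no separate proof of the corollary, as it is an immediate consequence): the all-or-none dichotomy on $\Ncal$ descends to any non-empty subclass $\Ncal'$, so each of the four quantified statements about $\Ncal'$ is equivalent to $\Mfrak\models\phi$, and composing a putative decision procedure with the recursive map $\phi\mapsto T(\phi)$ would decide the theory of $\Mfrak$.
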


The four problems in the previous corollary do not exhaust all possible undecidability consequences, but they seem to be the most natural ones.


\section{Non-Archimedean analytic functions}
As a general reference on non-Archimedean analytic functions including the case of positive characteristic, the reader might find useful \cite{HuYang}. However, we prefer to give mostly a self-contained presentation of preliminary material for our applications in this section.

\subsection{One variable analytic functions and Newton polygons}

Let $R$ be an integral domain, complete with respect to a non-Archimedean absolute value $|\cdot |$. Given real numbers $0\le a<b$ define $\Acal_R[a,b]$ as the ring of formal series in the variable $t$ 
$$
\sum_{n\in\Z} c_nt^n
$$
with coefficients in $R$ and such that for every $r\in[a,b]$ one has 
$$
\lim_{|n|\to \infty}|c_n|r^n=0.
$$
Here, of course, we follow the convention that if $a=0$ then $c_n=0$ for $n<0$. The completeness of $R$ is needed for the multiplication to be defined on $\Acal[a,b]$, but this requirement is no longer necessary when $a=0$. The next lemma is well-known but we were not able to find a reference.
\begin{lemma}
For every $r\in [a,b]$ with $r>0$, the absolute value on $R$ extends to a non-Archimedean absolute value $|\cdot |_r$ on $\Acal_R[a,b]$ as follows: if $h=\sum_nc_nt^n$ then
$$
|h|_r= \max_n |c_n|r^n.
$$
In particular, $\Acal_R[a,b]$ is an integral domain.
\end{lemma}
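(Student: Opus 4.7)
The plan is to check that $|h|_r=\max_n|c_n|r^n$ is well-defined and satisfies the three axioms of a non-Archimedean absolute value (positivity, ultrametric, multiplicativity) while extending $|\cdot|$ on $R$; the integral-domain statement then follows at once from multiplicativity. Well-definedness is essentially free: the defining condition on $\Acal_R[a,b]$ forces $|c_n|r^n\to 0$ as $|n|\to\infty$, so for every $\epsilon>0$ only finitely many indices $n$ satisfy $|c_n|r^n\ge\epsilon$, and in particular the maximum is attained on a (possibly empty) finite set. Positivity uses $r>0$ (so $|c_n|r^n=0$ iff $c_n=0$), the extension property is immediate from the constant case, and the ultrametric inequality $|h+h'|_r\le\max(|h|_r,|h'|_r)$ follows coefficient-wise.

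The main obstacle will be multiplicativity $|h_1h_2|_r=|h_1|_r|h_2|_r$, which I would prove by a Gauss-lemma-type argument isolating a dominant coefficient in the product. The upper bound $|h_1h_2|_r\le|h_1|_r|h_2|_r$ is routine: the $n$-th coefficient of $h_1h_2$ is $\sum_j c_j^{(1)}c_{n-j}^{(2)}$ (a sum that converges in $R$ by completeness together with the decay of the $|c_j^{(i)}|r^j$), and the ultrametric inequality on $R$ gives $|c_n(h_1h_2)|r^n\le\max_j(|c_j^{(1)}|r^j)(|c_{n-j}^{(2)}|r^{n-j})\le|h_1|_r|h_2|_r$. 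For the matching lower bound, for each nonzero $h$ the set $M(h):=\{n:|c_n|r^n=|h|_r\}$ is finite and nonempty by the decay condition, so I set $N(h):=\max M(h)$. The key claim I want to establish is that, with $N:=N(h_1)+N(h_2)$, the coefficient $c_N(h_1h_2)=\sum_j c_j^{(1)}c_{N-j}^{(2)}$ has absolute value exactly $r^{-N}|h_1|_r|h_2|_r$: the $j=N(h_1)$ summand contributes an element of $R$ of this absolute value, while for any other $j$, either $j>N(h_1)$ (forcing $|c_j^{(1)}|r^j<|h_1|_r$ by maximality of $N(h_1)$) or $j<N(h_1)$ so that $N-j>N(h_2)$ (forcing $|c_{N-j}^{(2)}|r^{N-j}<|h_2|_r$ symmetrically); in either case, combined with the trivial bound on the other factor, the corresponding product is strictly smaller than $r^{-N}|h_1|_r|h_2|_r$. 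The strong triangle inequality on $R$ then gives $|c_N(h_1h_2)|r^N=|h_1|_r|h_2|_r$, and hence $|h_1h_2|_r\ge|h_1|_r|h_2|_r$.

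Combining the two estimates gives multiplicativity. For the integral-domain claim, if $h_1,h_2\neq 0$ then positivity together with $r>0$ gives $|h_1|_r,|h_2|_r>0$, so $|h_1h_2|_r=|h_1|_r|h_2|_r>0$ and hence $h_1h_2\neq 0$.
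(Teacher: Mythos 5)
Your proof is correct, and it takes a genuinely different route to multiplicativity than the paper. The paper first observes that $r\mapsto|h|_r$ is continuous in $r$, and hence that it suffices to verify $|hg|_r=|h|_r|g|_r$ for $r$ outside a countable set; away from that set one may assume the maximizing index for each factor is \emph{unique}, and the dominant coefficient of the product is then extracted directly. You instead run the standard Gauss-lemma argument at every $r$: you pick the \emph{largest} index $N(h)$ achieving $|c_n|r^n=|h|_r$ (well-defined and unique by the decay condition), and show that at $N=N(h_1)+N(h_2)$ the single term $j=N(h_1)$ strictly dominates all others, so the strong triangle inequality (in its infinite-sum form, valid since $R$ is complete and the terms tend to zero) gives equality at that coefficient. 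Both arguments are essentially Gauss's lemma; yours avoids the continuity reduction at the cost of tracking an extremal index, while the paper's trades the extremal-index bookkeeping for a genericity/continuity step. One small point worth making explicit in your version: when $a>0$ the coefficient of $t^N$ in $h_1h_2$ is an infinite convergent series, and to conclude its absolute value equals that of the dominant term you should note that the remaining terms not only are each strictly smaller but also have absolute values tending to $0$, so their supremum is attained and is strictly below the dominant term — this is what lets you apply the isosceles-triangle principle.
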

\begin{proof}
That $|h|_r=0$ if and only if $h=0$ is clear. Let $g=\sum_n a_nt^n$. For the strong triangle inequality we have
$$
|h+g|_r=\max_n|c_n+a_n|r^n\le\max_n\max\{|c_n|,|a_n|\}r^n=  \max\{\max_n|c_n|r^n,\max_n|a_n|r^n\}.
$$
The function $r\mapsto |h|_r$ is continuous, hence, it suffices to prove $|fg|_r=|f|_r|g|_r$ for $r$ away from a numerable set. So we can assume that there are unique $i$ and $j$ (depending on $r$) such that $|h|_r=|c_i|r^i$ and $|g|_r=|a_j|r^j$. Note that $hg=\sum_n b_nt^n$ where
$$
b_n=\sum_{u+v=n}c_ua_v
$$
(this can be a convergent infinite series when $a>0$). The strong triangle inequality gives for each $n$
$$
|b_n|r^n\le r^n\max_u|c_ua_{n-u}|=\max_u |c_u|r^u\cdot |a_{n-u}|r^{n-u}\le |c_i|r^i\cdot |a_j|r^j=|h|_r|g|_r
$$
hence $|hg|_r\le |h|_r|g|_r$. However, among the pairs $(u,v)$ with $u+v=i+j$ we have that $(u,v)=(i,j)$ gives a strict maximum for the quantity $|c_ua_v|$ thanks to our assumptions on $i$ and $j$. The strong triangle inequality then gives
$$
|b_{i+j}|r^{i+j}= \left|\sum_{u+v=i+j}c_ua_{v}\right|r^{i+j}=|c_ia_j|r^{i+j}=|h|_r|g|_r
$$
and hence $|hg|_r\ge |h|_r|g|_r$. This proves $|hg|_r= |h|_r|g|_r$.
\end{proof}
Given any $h\in \Acal_R[a,b]-\{0\}$ the Newton polygon is defined to be the function $n_h : (\log a,\log b] \to \R$ given by 
$$
\pi_h(x)=\log |h|_{\exp x}=\max_{n\in \Z} (nx+\log |c_n|)
$$ 
(if $c_n=0$ we take $\log |c_n|=-\infty$). Here we list some of the basic properties of Newton polygons. For the convenience of the reader we give a proof.

\begin{lemma} Let $h,g\in \Acal_R[a,b]-\{0\}$. The function $\pi_h(x)$ is continuous, convex, and piecewise linear with integer slopes. Moreover, we have $\pi_{h(1/t)}(x)=\pi_{h(t)}(-x)$ and $\pi_{hg}(x)=\pi_h(x)+\pi_g(x)$.
\end{lemma}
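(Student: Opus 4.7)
The plan is to view $\pi_h$ as a pointwise supremum of the affine functions $L_n(x) = nx + \log|c_n|$ (with $L_n \equiv -\infty$ when $c_n=0$), and to exploit the multiplicativity of $|\cdot|_r$ already established in the previous lemma.

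First I would dispatch the two easy identities. For $\pi_{hg} = \pi_h + \pi_g$, I take logarithms in the identity $|hg|_{e^x} = |h|_{e^x}\cdot|g|_{e^x}$ proved in the previous lemma. For $\pi_{h(1/t)}(x) = \pi_{h(t)}(-x)$, I write $h(1/t) = \sum_m c_{-m}t^m$ and observe that
\[
\pi_{h(1/t)}(x) = \max_m \bigl(mx + \log|c_{-m}|\bigr) = \max_n\bigl(-nx + \log|c_n|\bigr) = \pi_{h(t)}(-x).
\]

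The bulk of the work is showing that $\pi_h$ is continuous, convex and piecewise linear with integer slopes. Convexity and lower semicontinuity are automatic from the expression $\pi_h(x) = \sup_n L_n(x)$, so the real task is to prove that \emph{locally only finitely many} $L_n$ contribute to the supremum; once that is established, $\pi_h$ coincides locally with the maximum of finitely many affine functions of integer slope, which is visibly continuous, convex and piecewise linear with integer slopes. The plan is to fix an arbitrary compact subinterval $[X_1,X_2]\subset(\log a, \log b]$ and set $r_i = e^{X_i}$. For $n\geq 0$ the affine function $L_n$ is non-decreasing, so $L_n(x)\leq L_n(X_2) = \log(|c_n|r_2^n)$ on the subinterval, and the series decay $|c_n|r_2^n\to 0$ forces $L_n\to-\infty$ uniformly on $[X_1,X_2]$. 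Symmetrically, for $n\leq 0$ the function $L_n$ is non-increasing, hence bounded above by $\log(|c_n|r_1^n)$, which also tends to $-\infty$ thanks to the decay condition at $r_1$ (note $r_1>0$ since $X_1$ is finite, even in the case $a=0$). Consequently, fixing any $n_0$ for which $L_{n_0}$ is not identically $-\infty$ on the subinterval, at most finitely many indices $n$ satisfy $\sup_{[X_1,X_2]}L_n\ge \inf_{[X_1,X_2]}L_{n_0}$, so the supremum defining $\pi_h$ on $[X_1,X_2]$ reduces to a finite maximum.

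The only delicate point I anticipate is handling the boundary of the domain, in particular making sure the argument works whether or not $a=0$ and at the closed endpoint $x=\log b$; this is taken care of by the observation that every point of $(\log a,\log b]$ lies in the interior of some compact subinterval of the form $[X_1,X_2]$ as above (shrinking $X_2$ slightly to $\log b$ if necessary and using that $L_n(\log b)$ is already controlled by the same decay estimate). Once that is checked, the three properties hold on all of $(\log a,\log b]$ and the lemma is complete.
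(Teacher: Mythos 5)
Your proof is correct and follows the same overall route as the paper: the identities $\pi_{h(1/t)}(x)=\pi_{h(t)}(-x)$ and $\pi_{hg}=\pi_h+\pi_g$ are established exactly as the paper does (reindexing for the first, taking logarithms of $|hg|_r=|h|_r|g|_r$ for the second). The one real difference is that the paper simply declares continuity, convexity, and piecewise linearity with integer slopes ``clear'' from the formula $\pi_h(x)=\max_n(nx+\log|c_n|)$, whereas you supply the supporting local-finiteness argument — showing via the decay $|c_n|r^n\to 0$ at the endpoints of a compact subinterval that only finitely many affine pieces $L_n$ can contribute to the supremum there. That is the right justification for the claim the paper glosses over, and the argument is sound (including the handling of $a=0$, where the $n<0$ terms vanish by convention). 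Your only slightly imprecise phrasing is asserting that every point of $(\log a,\log b]$ lies in the \emph{interior} of a compact subinterval of the stated form, which fails at $x=\log b$; but as your own parenthetical hints, one simply takes $X_2=\log b$, and the same decay estimate applies at that endpoint, so nothing is lost.
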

\begin{proof}
That $\pi_h(x)$ is continuous, convex and piecewise linear with integer slopes is clear from the expression $\pi_h(x)=\max_{n\in \Z} (nx+\log |c_n|)$. To show $\pi_{h(1/t)}(x)=\pi_{h(t)}(-x)$, expand $h(t)$ and $h(1/t)$ as series 
$$
h(t)=\sum_{n\in\Z}c_nt^n,\quad h(1/t)=\sum a_nt^n
$$
and observe that $a_n=c_{-n}$. Hence 
$$
\pi_{h(1/t)}(x)=\max_{n}(nx+\log |a_n|)=\max_{n}((-n)x+\log |c_n|)=\pi_{h(t)}(x).
$$
Finally,  $\pi_{hg}(x)=\pi_h(x)+\pi_g(x)$ follows from $\pi_h(x)=\log |h|_{\exp x}$ and the fact that $h$ is an absolute value.
\end{proof}

\subsection{Analytic functions on $\A^1$ and $\G_m$}

Define $\Acal_R(\A^1)=\cap_{n>0}\Acal_R[0,n]$ and $\Acal_R(\G_m)=\cap_{n>0}\Acal[1/n,n]$. We think about $\Acal_R(\A^1)$ and $\Acal_R(\G_m)$ as rings of analytic functions on the affine line and the punctured affine line (hence the notation involving $\A^1$ and $\G_m$) although we will not use any particular property of $\A^1$ and $\G_m$ as geometric objects.

Observe that $\Acal_R(\A^1)$ is the same $\Acal_R$ defined in the introduction (except from the fact that in this section we assume that $R$ is complete), while $\Acal_R(\G_m)$ is the ring of power series $\sum c_nt^n$ with $c_nr^{|n|}\to 0$ as $|n|\to \infty$ for every $r\in \R^+$. In both cases, given a non-zero $h$ one obtains a Newton polygon $\pi_h$ defined on all $\R$.

The following lemma is well-known (for instance, it follows from Lemma 4.5.2 in \cite{Berkovich}).

\begin{lemma}
The group of units of $\Acal_R(\A^1)$ is $\Acal_R(\A^1)^\times = R^\times$. 
\end{lemma}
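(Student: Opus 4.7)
The plan is to argue via Newton polygons, using their multiplicativity and the fact that elements of $\Acal_R(\A^1)$ have no negative powers of $t$.

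Let $h=\sum_{n\ge 0}c_n t^n\in\Acal_R(\A^1)^\times$ with inverse $g=\sum_{n\ge 0}a_nt^n\in \Acal_R(\A^1)$. First I would record that $\pi_h(x)=\max_{n\ge 0}(nx+\log|c_n|)$ and similarly for $\pi_g$, so both Newton polygons are convex piecewise-linear functions on $\R$ whose slopes are non-negative integers (they arise only from indices $n\ge 0$). In particular both $\pi_h$ and $\pi_g$ are non-decreasing on $\R$.

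Next, from $hg=1$ and the multiplicativity $\pi_{hg}=\pi_h+\pi_g$ established in the previous lemma, together with $\pi_1(x)=\log|1|_{\exp x}=0$, I get
\[
\pi_h(x)+\pi_g(x)=0\quad\text{for all }x\in\R.
\]
Since both summands are non-decreasing in $x$ and their sum is constant, each of them must be constant on $\R$. Hence $\pi_h(x)\equiv\log|c_0|$ (noting that $c_0\ne 0$, otherwise $h$ would not be a unit in the integral domain $\Acal_R(\A^1)$).

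The key step, and the only one that uses anything beyond formal properties, is to conclude $c_n=0$ for all $n\ge 1$: if some $c_n\ne 0$ with $n\ge 1$, then $nx+\log|c_n|\to+\infty$ as $x\to+\infty$, contradicting $\pi_h(x)=\log|c_0|$ for all $x$. Therefore $h=c_0\in R$. Applying exactly the same reasoning to $g$ yields $g=a_0\in R$, and $c_0a_0=1$ forces $c_0\in R^\times$. The reverse inclusion $R^\times\subseteq\Acal_R(\A^1)^\times$ is immediate, giving equality.

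Nothing in this argument is really delicate; the only point worth being careful about is the passage from non-negative slopes to $\pi_h$ being non-decreasing (so that the zero-sum condition pins both polygons down to constants), and the limit argument $x\to+\infty$ that converts the constancy of $\pi_h$ into the vanishing of all higher coefficients.
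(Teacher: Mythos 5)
Your proof is correct. Note that the paper does not actually prove this lemma; it simply cites Lemma 4.5.2 of Berkovich's book, so there is no in-paper argument to compare against. Your Newton-polygon argument is the natural self-contained proof given the toolkit the paper has already set up: the non-decreasing nature of $\pi_h$ follows from the fact that all terms $nx+\log|c_n|$ have non-negative slope, the additivity $\pi_h+\pi_g=\pi_1\equiv 0$ then forces both to be constant, and constancy of $\pi_h$ rules out any nonzero $c_n$ with $n\ge 1$. One small stylistic point: the parenthetical that $c_0\ne 0$ ``because $h$ would not be a unit in the integral domain'' is a bit elliptical; the cleanest justifications are either that evaluation at $t=0$ is a ring homomorphism $\Acal_R(\A^1)\to R$ sending $hg=1$ to $c_0a_0=1$, or simply that if $c_0=0$ then $\pi_h(x)\to-\infty$ as $x\to-\infty$, contradicting the constancy you just deduced. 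Either way the conclusion and the overall structure of the argument are sound.
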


In $\Acal_R(\G_m)$ we have the automorphism $\tau\in\Aut_R\Acal_R(\G_m)$ defined by the substitution $t\mapsto t^{-1}$. For analyzing this automorphism we need an elementary fact.

\begin{lemma} If the function $f:\R\to \R$ is convex and odd, then $f$ must be of the form $f(x)=m x$ for some $m\in\R$.
\end{lemma}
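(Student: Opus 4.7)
The plan is to show that the function $g(x) := f(x)/x$ is constant on $(0,\infty)$, from which the conclusion follows immediately by oddness together with $f(0)=0$ (the latter being automatic since $f(0)=-f(0)$). Since $g$ is defined only for $x\ne 0$, I will establish the equality $g(a)=g(b)$ directly for arbitrary $a,b>0$ by applying the convexity inequality to two carefully chosen representations of the origin as a convex combination of points of opposite sign.

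More precisely, given $a,b>0$, observe that $0=\frac{a}{a+b}(-b)+\frac{b}{a+b}\,a$. Applying convexity to this decomposition and using $f(-b)=-f(b)$ yields
$$
0=f(0)\le -\frac{a}{a+b}f(b)+\frac{b}{a+b}f(a),
$$
i.e.\ $af(b)\le bf(a)$, which rearranges to $f(b)/b\le f(a)/a$. Now I repeat the argument with the symmetric decomposition $0=\frac{a}{a+b}\,b+\frac{b}{a+b}(-a)$, which by the same procedure yields the reverse inequality $f(a)/a\le f(b)/b$. Hence $g(a)=g(b)$ for every pair $a,b>0$, so $g$ is constant, say equal to $m$, on $(0,\infty)$. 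Oddness then gives $f(x)=mx$ for $x<0$ as well, and $f(0)=0$ closes the argument.

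The proof is essentially a one-line manipulation; there is no real obstacle, since the lemma is an elementary and standard consequence of convexity. The only thing to be mindful of is not to try to use the three-slope inequality directly at the triple $(-x,0,x)$, which by the oddness hypothesis collapses to a tautology and produces no information; the trick is instead to use convex combinations involving two distinct positive magnitudes $a$ and $b$, so that the odd symmetry of $f$ is genuinely exploited.
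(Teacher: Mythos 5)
Your proof is correct. The paper states this lemma without proof, treating it as an ``elementary fact,'' so there is no internal argument to compare against; your argument supplies the omitted details cleanly and rigorously. One small stylistic remark: the two convex decompositions you use are in fact a single decomposition with the roles of $a$ and $b$ interchanged, so once you have shown $f(b)/b\le f(a)/a$ for all $a,b>0$, you can simply invoke the symmetry of that statement in $a$ and $b$ to get the reverse inequality, rather than writing out the second computation. The closing remark about the triple $(-x,0,x)$ collapsing to a tautology is a useful observation explaining why the decomposition must involve two distinct positive magnitudes.
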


Then we have:

\begin{lemma}\label{LemmaNormGm}
Let $h\in \Acal_R(\G_m)$. Suppose that $\tau(h)\cdot h=1$. Then $h=\pm t^n$ for some integer $n$.
\end{lemma}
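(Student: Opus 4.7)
The plan is to combine the multiplicativity of Newton polygons with the lemma about odd convex functions to force $h$ to be a monomial. First, take Newton polygons of both sides of the equation $\tau(h) \cdot h = 1$. Since $1$ has Newton polygon identically zero, and since $\pi_{\tau(h)}(x) = \pi_h(-x)$ by the formula $\pi_{h(1/t)}(x) = \pi_{h(t)}(-x)$ proved in the previous lemma, multiplicativity of Newton polygons gives
$$
\pi_h(x) + \pi_h(-x) = 0 \quad \text{for all } x \in \R.
$$
Thus $\pi_h$ is an odd convex function on $\R$, so by the preceding elementary lemma it must be linear: $\pi_h(x) = nx$ for some $n \in \R$. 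Since $\pi_h$ has integer slopes, $n \in \Z$.

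Next, I would translate this back into information about the coefficients of $h$. Writing $h = \sum_{k \in \Z} c_k t^k$, the identity $\pi_h(x) = nx$ is equivalent to $\max_k |c_k| r^k = r^n$ for all $r > 0$. For $k \neq n$ this forces $|c_k| \le r^{n-k}$ for every $r > 0$; letting $r \to \infty$ when $k > n$ and $r \to 0$ when $k < n$ yields $c_k = 0$. Consequently $h = c_n t^n$ with $|c_n| = 1$.

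Finally, plugging $h = c_n t^n$ into $\tau(h) \cdot h = 1$ gives $c_n^2 = 1$, and since $R$ is an integral domain this forces $c_n = \pm 1$, so $h = \pm t^n$.

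I do not anticipate any serious obstacle: the only mildly delicate point is ensuring that the maximum in the definition of $|h|_r$ is actually attained (so that the coefficient-wise inequality $|c_k| r^k \le r^n$ is valid for every individual $k$), but this follows from the convergence condition $|c_k| r^{|k|} \to 0$ in the definition of $\Acal_R(\G_m)$.
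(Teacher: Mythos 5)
Your argument is correct and follows essentially the same route as the paper: apply $\pi$ to $\tau(h)\cdot h = 1$, use $\pi_{\tau(h)}(x) = \pi_h(-x)$ and additivity to show $\pi_h$ is odd and convex, invoke the elementary lemma to make it linear with integer slope, and then conclude $h$ is a monomial with a unit coefficient that squares to $1$. The paper passes directly from $\pi_h(x)=nx$ to ``$h = a_n t^n$''; you simply make that coefficient-extraction step explicit, which is a harmless elaboration rather than a different method.
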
 
\begin{proof} We have $h(t)h(t^{-1})=1$. Thus, for all $x\in \R$ we get
$$
0=\pi_{1}(x)=\pi_{h(t)h(t^{-1})}(x)=\pi_{h(t)}(x)+\pi_{h(t^{-1})}(x)=\pi_{h(t)}(x) + \pi_{h(t)}(-x)
$$
and hence $\pi_h(x)$ is an odd function. But it is also convex, so it is of the form $\pi_h(x)= nx$ for some $n\in \R$. Recalling that $\pi_h(x)$ is piecewise linear with integer slopes we see that $n\in \Z$, and therefore $h=a_nt^n$ for some $n\in \Z$. Using the relation $\tau(h)\cdot h=1$ again, we obtain $a_n^2=1$, hence $a_n\in\{-1,1\}$.
\end{proof}


\subsection{Analytic functions in two variables} In this section we assume that $R$ is a complete \emph{field}. The ring of analytic functions on the variables $t,u$ with coefficients in $R$, denoted by $R\{t,u\}$, is defined to be the ring of all power series
$$
\sum_{m,n\ge 0} c_{mn}t^m u^n
$$
with coefficients in $R$ such that $\lim_{m+n\to \infty}|c_{mn}|r^{m+n}=0$ for every $r\in \R^+$ (this is indeed a ring under the usual addition and multiplication of power series, because $R$ is complete). We will be interested in quotients of this ring. 

\begin{lemma}\label{LemmaGmQuot} The rule $t\mapsto t$, $u\mapsto t^{-1}$ extends to an isomorphism of $R$-algebras
$$
\mu: \frac{R\{t,u\}}{(tu-1)R\{t,u\}}\to \Acal_R(\G_m).
$$
\end{lemma}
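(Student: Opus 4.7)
The plan is to lift $\mu$ to an $R$-algebra homomorphism $\tilde\mu\colon R\{t,u\}\to\Acal_R(\G_m)$ obtained by formally substituting $u\mapsto t^{-1}$, verify that its kernel is exactly the principal ideal $(tu-1)$, and verify surjectivity. The substitution $tu-1\mapsto 0$ is automatic, so the kernel computation is what makes $\mu$ injective, while surjectivity will be an easy splitting argument.

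\emph{Well-definedness of $\tilde\mu$.} Given $f=\sum c_{mn}t^mu^n\in R\{t,u\}$, formal substitution produces $\sum_{k\in\Z}d_kt^k$ with $d_k=\sum_{m-n=k}c_{mn}$. To make this rigorous, fix $r>0$ and pick $s>r$; since $|c_{mn}|s^{m+n}\to 0$, there is a constant $C_s$ with $|c_{mn}|\le C_s s^{-(m+n)}$ for all $m,n$. For $k\ge 0$ the sum defining $d_k$ is $\sum_{m\ge k}c_{m,m-k}$, which converges in the complete integral domain $R$ because the general term is bounded by $C_s s^{-(2m-k)}\to 0$; the analogous estimate for $k<0$ works symmetrically. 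Moreover
\[
|d_k|\,r^{|k|}\le \max_{m}|c_{m,m-k}|\,r^{|k|}\le C_s (r/s)^{|k|},
\]
which tends to $0$ as $|k|\to\infty$, so $\tilde\mu(f)\in\Acal_R(\G_m)$. Since $\tilde\mu(tu-1)=t\cdot t^{-1}-1=0$, $\tilde\mu$ descends to $\mu$.

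\emph{Surjectivity.} Given $h=\sum_{k\in\Z}c_kt^k\in\Acal_R(\G_m)$, set
\[
f=\sum_{k\ge 0}c_k t^k+\sum_{k<0}c_k u^{-k}\in R\{t,u\};
\]
this lies in $R\{t,u\}$ because its only nonzero coefficients $c_{mn}$ satisfy $\{m=0\}\cup\{n=0\}$ and the bound $|c_{mn}|r^{m+n}=|c_k|r^{|k|}$ goes to $0$ by hypothesis on $h$. Clearly $\tilde\mu(f)=h$.

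\emph{Computation of $\ker\tilde\mu$.} The key identity is $t^m u^n-t^{m-1}u^{n-1}=(tu-1)t^{m-1}u^{n-1}$, which iterated gives, for $m,n\ge 1$ and $k=\min(m,n)$,
\[
t^m u^n=t^{m-k}u^{n-k}+(tu-1)\sum_{j=1}^{k}t^{m-j}u^{n-j}.
\]
Summing against the coefficients of $f$ suggests the candidate remainder
\[
g=\sum_{m,n}c_{mn}\sum_{j=1}^{\min(m,n)}t^{m-j}u^{n-j}\in R[[t,u]],
\]
whose coefficient of $t^M u^N$ is $g_{MN}=\sum_{j\ge 1}c_{M+j,N+j}$. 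Using the bound $|c_{mn}|\le C_s s^{-(m+n)}$ one gets $|g_{MN}|\,r^{M+N}\le C_s s^{-2}(r/s)^{M+N}\to 0$, so $g\in R\{t,u\}$. A direct coefficient comparison shows that $f-(tu-1)g$ has vanishing coefficient at every $t^a u^b$ with $a,b\ge 1$, and coefficient $d_a$ at $t^a$ (for $a\ge 0$) and $d_{-b}$ at $u^b$ (for $b\ge 1$); that is, $f-(tu-1)g=\sum_{k\ge 0}d_k t^k+\sum_{k<0}d_k u^{-k}$. If $f\in\ker\tilde\mu$, then $\sum_k d_k t^k=\tilde\mu(f)=0$ in $\Acal_R(\G_m)$, so every $d_k$ vanishes and $f=(tu-1)g$, proving $\ker\tilde\mu\subseteq(tu-1)R\{t,u\}$; the reverse inclusion is trivial.

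The only real obstacle is the last step, and specifically the convergence/rearrangement bookkeeping needed to make sense of the formal quotient $g$ and of the identity $f-(tu-1)g=\sum_k d_k v_k$ inside $R\{t,u\}$; the uniform bound $|c_{mn}|\le C_s s^{-(m+n)}$ supplied by membership in $R\{t,u\}$ is the essential tool, after which all rearrangements are legal and the coefficient computation is mechanical.
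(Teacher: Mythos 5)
Your proof follows the paper's strategy step for step: define the substitution map $\tilde\mu$ on $R\{t,u\}$, check that its output lands in $\Acal_R(\G_m)$ using the non-Archimedean bound $|c_{mn}|\le C_s s^{-(m+n)}$, get surjectivity by splitting a Laurent-type series into its nonnegative and negative parts, and for the kernel construct the same quotient $g$ with $g_{MN}=\sum_{j\ge 1}c_{M+j,N+j}$ (the paper obtains this as $\gamma_{mn}$ via a recursion $\gamma_{mn}=\gamma_{m-1,n-1}-c_{mn}$ and then rewrites it as this infinite sum using $\tilde\mu(f)=0$; you arrive at it directly from the telescoping identity, which is cleaner motivation but the same object). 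The coefficient comparison $f-(tu-1)g=\sum_{a\ge 0}d_a t^a+\sum_{b\ge 1}d_{-b}u^b$ is correct and the estimate showing $g\in R\{t,u\}$ is the right one.

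The one thing you omit is checking that $\tilde\mu$ is actually an $R$-algebra \emph{homomorphism}. You assert it in the first sentence and then use it implicitly (descent modulo the ideal $(tu-1)$ requires the kernel to be an ideal, and the whole ``$\mu$ is an isomorphism of $R$-algebras'' conclusion requires multiplicativity), but you never verify $\tilde\mu(fg)=\tilde\mu(f)\tilde\mu(g)$. This is not automatic: the coefficient $d_k$ of $\tilde\mu(fg)$ is an infinite sum of products, and equating it with the corresponding coefficient of $\tilde\mu(f)\tilde\mu(g)$ (a product of two infinite sums) requires a rearrangement of absolutely convergent double series — essentially the same estimates you already have, but it is a separate step, and the paper devotes a paragraph to it. You should add this check; once done, the proof is complete and matches the paper's.
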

\begin{proof}
Consider the map $M: R\{t,u\} \to \Acal_R(\G_m)$ given by $M(f(t,u))=f(t,t^{-1})$. We will check that this map is well defined, that it is an $R$-algebra morphism, it is surjective and its kernel is $(tu-1)R\{t,u\}$. After all this is checked, $\mu$ will be the map induced by $M$ and the result will be proved.

\emph{$M$ is well defined:} Let $f(t,u)=\sum_{m,n\ge 0} c_{mn}t^mu^n\in R\{t,u\}$. Computing $M(f)$ formally we find $M(f)=\sum_{d} a_dt^d$ where
$$
a_d=\sum_{m-n=d} c_{mn},\quad d\in \Z.
$$
It suffices to show that each $a_d$ is a convergent series (that is, $a_d\in R$) and that for any given $r>0$ we have 

\begin{equation}\label{limit}
\lim _{|d|\to \infty} |a_d|r^d=0.
\end{equation}

For fixed $d$, the pairs $(m,n)$ with $m-n=d$ satisfy $m+n\to \infty$, hence $|c_{mn}|\to 0$ as $m-n=d$ because $f\in R\{t,u\}$. Therefore $a_d\in R$.

Given $d$ let $(m_d,n_d)$ be a pair with $m_d-n_d=d$ that maximizes the quantity $|c_{mn}|$. Note that $m_d+n_d\ge |d|$ because $m_d,n_d\ge 0$ and $m_d-n_d=d$. Therefore $|a_d|\le \max_{m-n=d}|c_{mn}|=|c_{m_dn_d}|\to 0$ as $|d|\to\infty$ (again, because $f\in R\{t,u\}$). This proves \eqref{limit} for $r=1$. In particular, there is $K>0$ such that $|a_d|<K$ for all $d$.

We now prove \eqref{limit} when $r>1$ (the case $0<r<1$ is similar). If $d\to -\infty$ then $|a_d|r^d\le Kr^d\to 0$. On the other hand, if $d\to +\infty$ then  
$$
|a_d|r^d\le |c_{m_dn_d}|r^d\le |c_{m_dn_d}| r^{m_d+n_d}\to 0
$$
because $m_d+n_d\ge |d|=d$ and $f\in R\{t,u\}$. This proves \eqref{limit} for all $r>0$.

\emph{$M$ is an $R$-algebra morphism:} $R$-linearity is clear, so we only need to check that $M(fg)=M(f)M(g)$ for all $f,g\in  R\{t,u\}$. Write
$f(t,u)=\sum_{m,n\ge 0} b_{mn}t^mu^n$ and $f(t,u)=\sum_{m,n\ge 0} c_{mn}t^mu^n$. Using the formula for the coefficients of $M(f)$ proved in the previous item, we find $M(fg)=\sum_{d} a_d t^d$ where
$$
a_d=\sum_{m-n=d} \sum_{(i,j)+(r,s)=(m,n)}b_{ij}c_{rs} 
$$
and $M(f)M(g)=\sum_d a'_d t^d$ where
$$
a'_d=\sum_{d_1+d_2=d} \left(\sum_{i-j=d_1}b_{ij}\right)\left(\sum_{r-s=d_2}c_{rs}\right).
$$
The series for both $a_d$ and $a'_d$ are absolutely convergent by the computations in the previous item (namely, the verification that $M$ is well-defined) and because $R\{u,t\}$ and $\Acal(\G_m)$ are closed under multiplication. Thus we can rearrange the summands to conclude $a_d=a'_d$ for all $d$, which proves $M(fg)=M(f)M(g)$.  

\emph{$M$ is surjective:} Given $h\in \Acal_R(\G_m)$ we can separate terms with negative exponent on $t$, and terms with non-negative exponent on $t$, to conclude that there are $g_1,g_2\in\Acal_R(\A^1)$ such that $h(t)=g_1(t)+g_2(t^{-1})$. Let $f(t,u)=g_1(t)+g_2(u)$, then $f\in R\{t,u\}$ and $M(f)=h$.

\emph{The kernel of $M$ is $(tu-1)R\{t,u\}$:} Clearly $(tu-1)R\{t,u\}\subseteq \ker(M)$, and we will show the converse inclusion. Let $f=\sum_{m,n\ge 0} c_{mn}t^mu^n\in \ker (M)$. For integers $m,n$ we define $\gamma_{m,n}\in R$ recursively as follows:
\begin{itemize}
\item $\gamma_{mn}=0$ if $m<0$ or $n<0$.
\item $\gamma_{mn}=\gamma_{m-1,n-1}-c_{mn}$ for $m,n\ge 0$. 
\end{itemize}
Considering the points $(m,n)\in\Z^2$ it can be seen that these conditions uniquely define elements $\gamma_{mn}\in R$, which are non-zero only if $m,n\ge 0$. Indeed, if $m,n\ge 0$ we can iterate the second condition to obtain
$$
\gamma_{mn}=-\sum_{j=0}^{\min\{m,n\}}c_{m-j,n-j}.
$$
Moreover, since $M(f)=0$ we know $\sum_{m-n=d}c_{mn}=0$ for all $d$, so we deduce
$$
\gamma_{mn}=\sum_{j\ge 1}c_{m+j,n+j}.
$$

Consider the formal power series $F=\sum \gamma_{mn}t^mu^n$. We claim that $F\in R\{t,u\}$. Indeed, we have
$$
|\gamma_{mn}|=\left|\sum_{j\ge 1}c_{m+j,n+j}\right|\le \max_{j\ge 1}|c_{m+j,n+j}|.
$$
Therefore, for any given large $r$ (actually $r\ge 1$ suffices) we get
$$
|\gamma_{mn}|r^{m+n}\le \left(\max_{j\ge 1}|c_{m+j,n+j}|\right)r^{m+n}\le \max_{v+w\ge m+n}|c_{vw}|r^{v+w}\to 0
$$
as $m+n\to \infty$, because $\lim_{v+w\to \infty} |c_{vw}|r^{v+w}=0$. This shows $F\in R\{t,u\}$.

From the definition of the elements $\gamma_{mn}$ we find $f=(tu-1)F$ and therefore $f\in (tu-1)R\{t,u\}$.

This concludes the proof.
\end{proof}

There is a canonical injection of the polynomial ring $\Acal_R[u]$ into $R\{t,u\}$. This is preserved under suitable quotients.

\begin{lemma}\label{LemmaQuot}
Let $F\in \Acal_R[u]$ be a monic polynomial which is non-constant in $u$. The natural map 
$$
q: \Acal_R[u]\to R\{t,u\}/FR\{t,u\}
$$
induces an injective $\Acal_R$-algebra morphism 
$$
\Acal_R[u]/F\Acal_R[u]\to R\{t,u\}/FR\{t,u\}.
$$
\end{lemma}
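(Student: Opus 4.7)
The plan is to show that if $g \in \Acal_R[u]$ lies in the kernel of the natural map $q$, then $g \in F\Acal_R[u]$. Since $F$ is monic in $u$ of degree $d := \deg_u F$, I would first apply standard Euclidean division in $\Acal_R[u]$ to write $g = FQ + r_0$ with $Q, r_0 \in \Acal_R[u]$ and $\deg_u r_0 < d$, which reduces the problem to showing $r_0 = 0$. By hypothesis there is some $H \in R\{t,u\}$ with $g = FH$; setting $K := H - Q \in R\{t,u\}$ yields the identity $FK = r_0$ in $R\{t,u\}$, so the task reduces to proving $K = 0$.

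Expand $F = u^d + \sum_{i=0}^{d-1} f_i u^i$ with $f_i \in \Acal_R$ and $K = \sum_{n \geq 0} K_n u^n$ with $K_n \in \Acal_R$. Comparing the coefficient of $u^N$ in $FK = r_0$ for each $N \geq d$ yields the recurrence
$$ K_M \;=\; -\sum_{i=0}^{d-1} f_i \, K_{M+d-i}, \qquad M \geq 0, $$
which expresses $K_M$ as an $\Acal_R$-linear combination of the $d$ subsequent coefficients $K_{M+1},\dots,K_{M+d}$. Iterating this $k$ times, $K_M$ becomes a linear combination of the $K_j$ with $k \leq j - M \leq kd$, whose coefficients are products of $k$ elements drawn from $\{f_0,\dots,f_{d-1}\}$.

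I would then close the argument using the Gauss norms $|\cdot|_s$ on $\Acal_R$ together with the two-variable analyticity of $K$. Since elements of $\Acal_R$ involve only non-negative $t$-powers, $|h|_1 \leq |h|_s$ for every $s \geq 1$ and $h \in \Acal_R$; moreover, the condition $K \in R\{t,u\}$ means $\|K\|_s := \max_{m,n}|k_{mn}|s^{m+n} < \infty$ for every $s > 0$, which yields $|K_n|_s \leq \|K\|_s s^{-n}$. Setting $B := \max_i |f_i|_1$, the iterated recurrence gives
$$ |K_M|_1 \;\leq\; B^k \, \max_{j \geq k} |K_{M+j}|_1 \;\leq\; B^k \, \|K\|_s \, s^{-(M+k)} \;=\; \|K\|_s \, s^{-M}\, (B/s)^{k} $$
for every $s \geq 1$. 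Choosing $s > B$ and letting $k \to \infty$ forces $|K_M|_1 = 0$, hence $K_M = 0$ for every $M$; thus $K = 0$, so $r_0 = FK = 0$, and $g = FQ \in F\Acal_R[u]$.

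The main obstacle is that the recurrence expresses $K_M$ in terms of coefficients $K_{M+j}$ of strictly larger index, so one cannot solve for $K_M$ by any downward induction. What rescues the argument is that the analyticity of $K$ in both variables is a strong condition, forcing the decay $|K_n|_s \leq \|K\|_s s^{-n}$ at every radius $s > 0$; decoupling this decay parameter $s$ from the radius at which the $f_i$ are measured lets the geometric decay $s^{-k}$ defeat the geometric growth $B^k$ generated by the iterated recurrence.
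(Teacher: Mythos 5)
Your proof is correct, and it diverges from the paper at the crucial step. Both proofs perform Euclidean division by the monic polynomial $F$ to reduce the claim to: if $K\in R\{t,u\}$ and $FK\in\Acal_R[u]$ has $u$-degree strictly less than $\deg_u F$, then $K=0$. At that point the paper argues qualitatively: it factors $K=t^r g$ with $g(0,u)\ne 0$, divides through by $t^r$, and specializes at $t=0$, which reduces the two-variable statement to a one-variable one where a degree (Newton polygon) comparison against the monic $F(0,u)$ gives the contradiction. You instead argue quantitatively: you read off from $FK=r_0$ a recurrence expressing $K_M$ in terms of higher-index coefficients, iterate it, and defeat the resulting growth $B^k$ (measured in the Gauss norm at radius $1$) by the decay $\|K\|_s\,s^{-(M+k)}$ coming from two-variable analyticity at radius $s>B$. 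The two proofs buy slightly different things: the paper's is shorter and more conceptual, leaning on the one-variable theory already set up (units and Newton polygons in $\Acal_R$); yours is more self-contained and explicit, and is essentially a direct proof of the uniqueness/stability of Weierstrass-type division in $R\{t,u\}$, which does not depend on being able to specialize $t$. One small imprecision worth noting: after $k$ iterations the coefficient of each $K_{M+\ell}$ is a \emph{sum} of products of $k$ of the $f_i$, not a single product; the ultrametric inequality still bounds it by $B^k$, so the estimate (and hence the conclusion) is unaffected.
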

\begin{proof}
One has to show that $\ker (q) = F\Acal_R[u]$, or equivalently, that 
$$
\Acal_R[u]\cap F R\{t,u\}=F\Acal_R[u]. 
$$
The inclusion $\Acal_R[u]\cap F R\{t,u\}\supseteq F\Acal_R[u]$ is clear. For the reciprocal inclusion, let $h\in R\{t,u\}$ be such that $Fh=G\in \Acal_R[u]$. Let $Q,H\in \Acal_R[u]$ be such that $G=FQ+H$ and $\deg H<\deg F$ (they exist because $F$ is monic). We claim that $h=Q$. For otherwise, we have $H=(h-Q)F=t^rgF$ for some $r\ge 0$ and some $g\in R\{t,u\}$ with $g(0,u)\ne 0$. We may assume that $r=0$ (after dividing by $t^r$ the coefficients of $H$ if necessary) and therefore substituting $t=0$ in $H=gF$ we reach a contradiction ($F$ is monic and we can compare degrees in $u$). Therefore $h=Q\in \Acal_R[u]$. 
\end{proof}

\section{Analytic solutions of Pell equations}\label{SecPell}

\subsection{Analytic solutions are actually polynomials}
 
In \cite{Denef0}, Denef studied the solutions of the Pell equation
$$
X^2-(t^2-1)Y^2=1
$$
over a polynomial ring $R[t]$ when $R$ is an integral domain of characteristic zero. He proved that the only solutions in $R[t]$ are of the form $(\pm x_n,y_n)$ where $x_n,y_n\in R[t]$ are defined by
\begin{equation}\label{Sol0}
x_n + y_n \sqrt{t^2-1}=(t+\sqrt{t^2-1})^n, \quad n\in \Z.
\end{equation}
Then in \cite{Denefp}, Denef proved that the same is true whenever $R$ is an integral domain of characteristic $p>2$. Moreover, when $\cha R=2$ he proved that the only solutions in $R[t]$ of
$$
X^2+tXY+Y^2=1
$$
are given by the pairs $(\pm x_n,y_n)$ defined by
\begin{equation}\label{Sol2}
x_n+y_n\alpha = \alpha^n \quad n\in \Z
\end{equation}
where $\alpha$ is a root of $Z^2+tZ+1=0$. In this section we establish the following theorem.
\begin{theorem}\label{ThmPell}
Let $R$ be an integral domain endowed with a non-Archimedean absolute value $|\cdot |$. If $\cha R\ne 2$, then the only solutions in $\Acal_R$ of the Pell equation 
\begin{equation}\label{Pell0}
X^2-(t^2-1)Y^2=1
\end{equation}
are the pairs of polynomials $(\pm x_n,y_n)$ defined by \eqref{Sol0}.  If $\cha R= 2$, then the only solutions in $\Acal_R$ of the Pell equation 
\begin{equation}\label{Pell2}
X^2+ tXY + Y^2=1
\end{equation}
are the pairs of polynomials $(x_n,y_n)$ defined by \eqref{Sol2}. In all cases, the polynomials $x_n$ and $y_n$ have coefficients in the sub-ring of $R$ generated by $1$ (that is, $\Z$ or $\F_p$ depending on the characteristic).
\end{theorem}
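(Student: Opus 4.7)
The plan is to translate the Pell equation into a ``norm-one'' equation in $\Acal_R(\G_m)$ and apply Lemma \ref{LemmaNormGm}. Without loss of generality $R$ may be taken to be a complete non-Archimedean field: any solution in $\Acal_R$ remains a solution in $\Acal_{\hat K}$, where $\hat K$ is the completion of $\mathrm{Frac}(R)$, and the asserted coefficients lie in the prime subring, which is common to $R$ and $\hat K$.

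Let $F = u^2 - 2tu + 1$ if $\cha R \neq 2$ and $F = u^2 + tu + 1$ if $\cha R = 2$. In both cases $F \in \Acal_R[u]$ is monic of degree two in $u$, and Vieta gives that its other root $u^\ast$ (namely $2t - u$ or $t + u$, respectively) satisfies $u\, u^\ast = 1$. Set $B = R\{t,u\}/(F)$; by Lemma \ref{LemmaQuot}, $\Acal_R$ embeds into $B$. The substitutions $a = u$, $b = u^\ast$ (with inverse $u = a$, $t = (a+b)/2$ in odd characteristic, or $t = a + b$ in characteristic $2$) send $F = 0$ to $ab - 1 = 0$, and a short convergence check using the estimate $|((a+b)/2)^m a^n|_r \le (r/|2|)^{m+n}$ (valid because $|2| \le 1$ in the non-Archimedean setting) shows these substitutions extend to mutually inverse ring maps $R\{t,u\}/(F) \leftrightarrow R\{a,b\}/(ab-1)$. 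Combined with Lemma \ref{LemmaGmQuot} this yields an isomorphism $\bar\phi \colon B \xrightarrow{\sim} \Acal_R(\G_m)$ sending $u \mapsto s$ and $t \mapsto (s+s^{-1})/2$ (respectively $s+s^{-1}$ in characteristic $2$), under which the Galois involution $\sigma \colon u \mapsto u^\ast$ corresponds to $\tau \colon s \mapsto s^{-1}$.

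Given a solution $(X, Y)$, set $\xi = X + Y(u - t)$ in odd characteristic (so that $(u-t)^2 \equiv t^2 - 1 \pmod{F}$) and $\xi = X + Y u$ in characteristic $2$. A direct computation using $F = 0$ shows that $\xi \cdot \sigma(\xi)$ equals the left-hand side of the corresponding Pell equation, hence $\xi \cdot \sigma(\xi) = 1$ in $B$. Applying $\bar\phi$ gives $\eta \cdot \tau(\eta) = 1$ where $\eta := \bar\phi(\xi)$, so Lemma \ref{LemmaNormGm} yields $\eta = \pm s^n$ for some $n \in \Z$ (the sign being trivial in characteristic $2$), i.e., $\xi = \pm u^n$ in $B$. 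By Denef's construction, $u^n$ expands in the $\Acal_R$-basis $\{1, u\}$ of $B$ as $x_n + y_n(u - t)$ in odd characteristic (via $u = t + \sqrt{t^2-1}$) and as $x_n + y_n u$ in characteristic $2$ (via $u = \alpha$), so comparing coefficients forces $(X, Y) = (\pm x_n, y_n)$, with the asserted coefficients in the prime subring of $R$ coming from the explicit recurrence for $x_n, y_n$.

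The principal technical obstacle is verifying that $\bar\phi$ is well-defined in odd characteristic, where the substitution $t \mapsto (s + s^{-1})/2$ divides by $2$ — a priori problematic since $|2|$ can be strictly less than $1$. The estimate above precisely quantifies the loss and the entire-function decay $|c_{mn}| R^{m+n} \to 0$ at $R = r/|2|$ absorbs it; once $\bar\phi$ is in hand the remainder of the argument is formal. The essential role of Lemma \ref{LemmaNormGm}, which depends on the convexity of Newton polygons, explains why no analogous rigidity result holds for complex entire solutions of the Pell equation.
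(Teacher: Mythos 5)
Your proposal is correct and follows essentially the same strategy as the paper's proof: transfer the ``norm one'' condition into $\Acal_R(\G_m)$ by passing through the two-variable ring $R\{t,u\}$ modulo the defining quadratic, identify the Galois involution with $\tau\colon s\mapsto s^{-1}$ via a linear change of variables, and apply the Newton-polygon rigidity of Lemma \ref{LemmaNormGm}. The only cosmetic difference is that you set $u = t + \sqrt{t^2-1}$ (so $F = u^2 - 2tu + 1$) rather than $u = \sqrt{t^2-1}$ as in the paper, which makes Vieta's relation $u\,u^\ast = 1$ immediate; your explicit convergence estimate for the division by $2$ is the same verification the paper disposes of briefly in the proof of Lemma \ref{LemmaIso0}.
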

Of course, it is not hard to see that $x_n,y_n$ indeed give solutions and they are polynomials with coefficients in $\Z$ or $\F_p$ according to the characteristic (this is already in Denef's work); the difficult part is showing that these are the \emph{only} solutions in $\Acal_R$.

We remark that the case $R=\C_p$ (hence $\Acal_R=\Hcal_p$) was first established by Lipshitz and Pheidas in \cite{LipPhe}. Our proof is different and makes no use of differentiation, hence, it works even in positive characteristic. On the other hand, the analogue for complex entire functions is easily seen to be false, see \cite{PhZaSurvey}.

Before proving Theorem \ref{ThmPell}, let us state what we can obtain from it if we combine it with Denef's results on polynomial solutions of Pell equations. For simplicity, we only consider the case of characteristic $p>2$; the case of characteristic $2$ is similar and can be deduced from Lemma 3.1 in \cite{Denefp}

\begin{corollary} \label{DenefPell}
Assume that $\cha R=p>2$. The polynomials $x_n,y_n$ defined by \eqref{Sol0} satisfy the following:
\begin{enumerate}
\item $x_n$ has degree $|n|$ for $n\in\Z$, and $y_n$ has degree $|n|-1$ for $n\ne 0$.
\item All solutions of $X^2-(t^2-1)Y^2=1$ in $\Acal_R$ are given by the pairs $(x_n,y_n)$ and $(-x_n,y_n)$ for $n\in\Z$.
\item $x_{m+n}=x_mx_n+(t^2-1)y_my_n$ and $y_{m+n}=x_my_n+x_ny_m$.
\item $m$ divides $n$ in $\Z$ if and only if $y_m$ divides $y_n$ in $\Acal_R$.
\item For $r\ge 0$ we have $x_{mp^r}=x_m^{p^r}$, in particular $x_{p^r}=t^{p^r}$.
\item $x_m(t+1)=x_m(t)+1$ if and only if $m=\pm p^r$ for some $r\ge 0$.
\item $x_m\equiv 1 \mod (t-1)$, congruence in $\Acal_R$.
\end{enumerate}
\end{corollary}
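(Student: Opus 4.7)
The plan is to derive each item from the defining identity $x_n + y_n\sqrt{t^2-1} = (t+\sqrt{t^2-1})^n$, working in the quadratic extension of $\Acal_R$ by $\sqrt{t^2-1}$, together with Theorem~\ref{ThmPell}. For (1), I would expand via the binomial theorem and separate terms involving even and odd powers of $\sqrt{t^2-1}$ to obtain $x_n = \sum_{k \text{ even}} \binom{n}{k} t^{n-k}(t^2-1)^{k/2}$ and $y_n = \sum_{k \text{ odd}} \binom{n}{k} t^{n-k}(t^2-1)^{(k-1)/2}$ for $n \ge 0$; the leading coefficients in $t$ are both $2^{n-1}$, a unit in characteristic $p > 2$, so $\deg x_n = n$ and $\deg y_n = n-1$. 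The case $n < 0$ follows from $x_{-n} = x_n$ and $y_{-n} = -y_n$, which in turn comes from $(t+\sqrt{t^2-1})(t-\sqrt{t^2-1}) = 1$. Item (2) is exactly Theorem~\ref{ThmPell}. For (3), multiplying the defining identities for $m$ and $n$ yields $(x_m x_n + (t^2-1) y_m y_n) + (x_m y_n + x_n y_m)\sqrt{t^2-1} = (t + \sqrt{t^2-1})^{m+n}$, and comparison with the defining identity for $m+n$ produces both formulas.

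For (5), raising the defining identity to the $p^r$-th power and applying the Frobenius endomorphism gives $x_m^{p^r} + y_m^{p^r}(t^2-1)^{(p^r-1)/2}\sqrt{t^2-1} = x_{mp^r} + y_{mp^r}\sqrt{t^2-1}$; comparing the rational parts yields $x_{mp^r} = x_m^{p^r}$, and specializing $m = 1$ gives $x_{p^r} = t^{p^r}$. Item (7) follows by substituting $t = 1$ into the defining identity (where $\sqrt{t^2-1} = 0$), giving $x_m(1) = 1^m = 1$. For the forward direction of (4), induction on $k$ via $y_{(k+1)m} = x_m y_{km} + x_{km} y_m$, a special case of (3), shows $y_m \mid y_{km}$. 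For the converse, I would first observe that $y_m \mid y_n$ in $\Acal_R$ implies $y_m \mid y_n$ in $R[t]$: polynomial division in $R[t]$ (possible since $y_m$ has unit leading coefficient) gives $y_n = y_m q + s$ with $\deg s < \deg y_m$, and a nonzero $s/y_m$ cannot extend to an entire function because $y_m$ has a root in an algebraic closure of the fraction field. Then writing $n = qm + r$ with $0 \le r < m$ and applying (3) gives $y_n \equiv x_{qm} y_r \pmod{y_m}$ in $R[t]$; the Pell relation together with $y_m \mid y_{qm}$ (from the forward direction) forces $\gcd(x_{qm}, y_m) = 1$, hence $y_m \mid y_r$, forcing $r = 0$ by the degree bound from (1).

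The main obstacle is (6). The forward direction follows from (5) together with $x_{-m} = x_m$, since $(t+1)^{p^r} = t^{p^r} + 1$ in characteristic $p$. For the converse, the plan is first to establish the differential identity $x_m' = m y_m$, which can be proved by formally differentiating the defining identity in the quadratic extension (treating $d\sqrt{t^2-1}/dt = t/\sqrt{t^2-1}$), or alternatively by induction on $m$ using the addition formulas; this identity is valid in any characteristic. Given $x_m(t+1) = x_m(t) + 1$, comparing the coefficient of $t^{m-1}$ on both sides (the only contribution being $2^{m-1} m$ from expanding the leading term of $x_m(t+1)$) forces $2^{m-1} m = 0$ in $R$, hence $p \mid m$ since $p > 2$. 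Using $x_{-m} = x_m$ to reduce to $m > 0$, write $m = p^r k$ with $\gcd(k, p) = 1$; then $x_m = x_k^{p^r}$ by (5), so the equation becomes $x_k(t+1)^{p^r} = (x_k(t) + 1)^{p^r}$ by Frobenius. Since $R[t]$ is an integral domain, extracting $p^r$-th roots yields $x_k(t+1) = x_k(t) + 1$, and the same coefficient argument applied to $k$ forces $k = 1$ (as $\gcd(k, p) = 1$ and $p > 2$ make $2^{k-1} k \neq 0$ for $k > 1$), giving $m = \pm p^r$.
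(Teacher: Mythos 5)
Your proof is correct, but it takes a much longer and more self-contained route than the paper. The paper disposes of the corollary in a few lines: items (1), (3)--(7) are simply cited from Denef's Lemma~2.1 in \cite{Denefp} (which already proves these facts in $R[t]$), item (2) is Theorem~\ref{ThmPell}, and the only new content is the observation that a factor $h\in\Acal_R$ of a nonzero $P\in R[t]$ must itself lie in $R[t]$ --- proved cleanly via Newton polygons: $\pi_h+\pi_g=\pi_P$ grows at most linearly, and both $\pi_h,\pi_g$ are bounded below, so each has at most linear growth, forcing $h,g\in R[t]$. You instead re-derive all seven items directly from the defining identity $x_n+y_n\sqrt{t^2-1}=(t+\sqrt{t^2-1})^n$, which is essentially a reproof of Denef's lemma; this is correct but makes the corollary carry far more weight than the paper intends. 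The one place where your argument is genuinely weaker than the paper's is precisely the step they single out: in the converse of (4), your claim that ``a nonzero $s/y_m$ cannot extend to an entire function because $y_m$ has a root in an algebraic closure of the fraction field'' needs unpacking --- you must pass to a complete algebraically closed extension so that evaluation makes sense, and you must argue via multiplicities that since $\deg s<\deg y_m$, some root of $y_m$ (all of which lie in $\overline{\mathbb{F}}_p$, hence have absolute value $1$) is a pole of $s/y_m$; the Newton-polygon argument in the paper sidesteps all of this. Two smaller remarks: in (6) the differential identity $x_m'=my_m$ you announce is never actually used --- your direct comparison of the $t^{m-1}$ coefficient of $x_m(t+1)$ and $x_m(t)+1$ suffices (and is only meaningful for $m\ge 2$, the case $m=1$ being trivial); and the $\gcd$ manipulations in (4) implicitly use that $x_n,y_n\in\mathbb{F}_p[t]$ so everything happens in a UFD, which is worth stating.
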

\begin{proof}
After Theorem \ref{ThmPell}, this is a consequence of Lemma 2.1 \cite{Denefp}. The only fact that requires some attention is the following: if $0\ne P\in R[t]$ and $h\in\Acal_R$ divides $P$ in $\Acal_R$, then actually $h\in R[t]$ (this is needed in items 4 and 7). Indeed, let $d=\deg P$ and suppose that $P=hg\in R[t]$, then as  $x\to +\infty$ we have
$$
\pi_h(x) + \pi_g(x)= \pi_P(x)< dx +C
$$
for some constant $C$. Both $\pi_h(x)$ and $\pi_g(x)$ are bounded from below as $x\to +\infty$, hence they have at most linear growth. Therefore $h$ and $g$ are polynomials.
\end{proof}


\subsection{Characteristic different from $2$} 

Let us prove Theorem \ref{ThmPell} when $\cha R\ne 2$. We can assume that $R$ is a complete field by enlarging $R$ if necessary.  Put $\alpha = \sqrt{t^2-1}$. 

\begin{lemma}
Let $\Mcal_R$ be the fraction field of $\Acal_R$. Then $\alpha \notin \Mcal_R$, hence $\alpha$ is algebraic of degree $2$ over $\Mcal_R$.
\end{lemma}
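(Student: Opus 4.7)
The plan is a parity argument comparing multiplicities at the point $t=1$. Suppose for contradiction that $\alpha = h/g$ with $h, g \in \Acal_R$ and $g \ne 0$; clearing denominators,
$$
h^2 = (t^2-1)g^2,
$$
and in particular $h \ne 0$.

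The key preparation is to show that each nonzero $f \in \Acal_R$ has a finite (and multiplicative-on-products) multiplicity
$$
\ord_{t=1}(f) := \sup\{N \ge 0 : (t-1)^N \mid f \text{ in } \Acal_R\}.
$$
I would do this by re-expanding $f = \sum_{k \ge 0} c_k t^k$ around $1$: setting $s = t-1$ one computes $f = \sum_{j \ge 0} b_j s^j$ with $b_j = \sum_{k \ge j} \binom{k}{j} c_k$. Completeness of $R$ together with the non-Archimedean bound $|\binom{k}{j}| \le 1$ ensures each $b_j$ is a well-defined element of $R$, and the growth condition $|b_j| r^j \to 0$ for every $r > 0$ is inherited from the analogous condition on the $c_k$. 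The re-expansion is invertible (the inverse transform $c_k = \sum_{j \ge k} \binom{j}{k} (-1)^{j-k} b_j$ is justified by the same non-Archimedean estimates together with the classical identity $\sum_{j=k}^{m}(-1)^{j-k}\binom{m}{j}\binom{j}{k}=\delta_{mk}$), so $f=0$ iff all $b_j$ vanish, and $\ord_{t=1}(f) = \min\{j : b_j \ne 0\}$ is finite.

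Applying $\ord_{t=1}$ to $h^2 = (t^2-1)g^2$ gives
$$
2\,\ord_{t=1}(h) \;=\; \ord_{t=1}(t^2-1) \;+\; 2\,\ord_{t=1}(g).
$$
Because $\cha R \ne 2$, evaluating $t+1$ at $t=1$ yields $2 \ne 0$, so $\ord_{t=1}(t^2-1) = 1$; the displayed identity would then equate an even and an odd integer, a contradiction. Hence $\alpha \notin \Mcal_R$, and since $\alpha$ is a root of the monic quadratic $X^2-(t^2-1) \in \Mcal_R[X]$, it is algebraic of degree exactly $2$ over $\Mcal_R$. The only nontrivial input is the finiteness and multiplicativity of the local multiplicity, i.e., the re-expansion step where analysis (rather than pure formal algebra) is required; everything else is a one-line parity check.
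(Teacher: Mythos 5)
Your argument is correct, and it is a genuinely different proof from the one in the paper. The paper first reduces to the case that $R$ is a complete algebraically closed field, argues that $\alpha\in\Mcal_R$ would force $\alpha\in\Acal_R$ (because $\alpha$ is integral over $\Acal_R$ and has no ``poles''), and then uses the identity $(t+\alpha)(t-\alpha)=1$ together with the fact that $\Acal_R^\times=R^\times$ to conclude $t+\alpha,\ t-\alpha\in R$, hence $2t\in R$, a contradiction when $\cha R\neq 2$. Your proof instead avoids the unit-group input and the ``no poles'' step altogether: you work directly with $h^2=(t^2-1)g^2$ and build a well-defined, multiplicative valuation $\ord_{t=1}$ on $\Acal_R\setminus\{0\}$ via the change of variable $s=t-1$, then derive a parity contradiction since $\ord_{t=1}(t^2-1)=1$ when $\cha R\neq 2$. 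The price you pay is the analytic verification that $t\mapsto t+1$ induces an automorphism of $\Acal_R$ (which needs $R$ complete, an assumption the paper also makes in this subsection and which you correctly invoke), and the justification that the forward and inverse re-expansion transforms are mutually inverse on convergent series; your non-Archimedean estimates and the binomial inversion identity handle these. What the paper's route buys is brevity, at the cost of tacitly relying on $\Acal_R$ being integrally closed (or at least root-closed in its fraction field); your route is more elementary and self-contained, at the cost of a short computation. One small stylistic point: the key fact you isolate --- that order of vanishing at $t=1$ is finite and additive --- is most cleanly packaged as ``the substitution $t\mapsto t+1$ is a ring automorphism of $\Acal_R$, and order at $t=0$ of a nonzero power series over an integral domain is additive''; this also makes transparent that $\ord_{t=1}(f)=\min\{j:b_j\neq 0\}$ is finite precisely because the automorphism sends nonzero elements to nonzero elements.
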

\begin{proof} We can assume that $R$ is an algebraically closed complete field. Assume towards a contradiction, that $\alpha\in \Mcal_R$. Then actually $\alpha \in \Acal_R$ because $\alpha^2=t^2-1$ has no poles. We have $1=t^2-\alpha^2=(t+\alpha)(t-\alpha)$ with both $t\pm \alpha\in \Acal_R$, hence in $R$ because $\Acal_R^\times =R^\times$, from which we conclude that $2t=(t+\alpha)+(t-\alpha)\in R$, a contradiction.
\end{proof}

In particular, we have an automorphism $\sigma \in \Aut_{\Acal_R} \Acal_R[\alpha]$ defined by $\sigma(\alpha)=-\alpha$ (that is, interchanging the roots of $Z^2-t^2+1$). If $(a,b)\in\Acal_R^2$ is a solution of \eqref{Pell0} then $\omega =a+\alpha b\in \Acal_R[\alpha]$ satisfies $\omega\sigma(\omega)=1$. 

The natural map
$$
i:\Acal_R[\alpha]= \frac{\Acal_R[u]}{(t^2-u^2-1)\Acal_R[u]}\to B:=\frac{R\{t,u\}}{(t^2-u^2-1)R\{t,u\}}
$$
is injective by Lemma \ref{LemmaQuot}. Under this inclusion, the map $\sigma$ is a restriction of the automorphism $\epsilon\in\Aut_R B$ defined by the substitution $u\mapsto -u$. 
\begin{lemma}\label{LemmaIso0} The rule $t\mapsto (z+w)/2$, $u\mapsto (z-w)/2$ extends (by substitution) to an $R$-isomorphism
$$
\delta: B \to \frac{R\{z,w\}}{(zw-1)R\{z,w\}}
$$
whose inverse $\delta'$ is determined by $z\mapsto t+u$, $w\mapsto t-u$.
\end{lemma}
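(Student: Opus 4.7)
The plan is to build the maps at the level of the convergent power series rings $R\{t,u\}$ and $R\{z,w\}$ and then descend to the quotients. I would define an $R$-algebra homomorphism $\tilde\delta : R\{t,u\} \to R\{z,w\}$ by the substitution $t \mapsto (z+w)/2$, $u \mapsto (z-w)/2$ (which makes sense since $\cha R\ne 2$ and $R$ has been taken to be a complete field, so that $2$ is a unit), and an $R$-algebra homomorphism $\tilde\delta' : R\{z,w\} \to R\{t,u\}$ by $z \mapsto t+u$, $w \mapsto t-u$.

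The one genuine point to check, and the main (mild) obstacle, is that these substitutions really land in the target power series ring rather than merely in some ring of formal series. For this I would work with the two-variable Gauss-type norm $\left|\sum c_{mn}x^my^n\right|_r = \max_{m,n}|c_{mn}|r^{m+n}$, whose multiplicativity is proved by the same argument used in Section 3 for $\Acal_R[a,b]$. Since the absolute value is non-Archimedean, $|t\pm u|_r \le r$ and $|(z\pm w)/2|_r \le r/|2|$. Thus, given $f = \sum c_{mn}z^mw^n \in R\{z,w\}$, the substituted terms satisfy $|c_{mn}(t+u)^m(t-u)^n|_r \le |c_{mn}|r^{m+n} \to 0$ as $m+n\to\infty$, so $\tilde\delta'(f) \in R\{t,u\}$; the symmetric estimate with $r/|2|$ in place of $r$ handles $\tilde\delta$.

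Having established well-definedness, I would verify that the substitutions send the defining ideals into one another: a direct computation gives $\tilde\delta(t^2-u^2-1) = ((z+w)/2)^2 - ((z-w)/2)^2 - 1 = zw - 1$ and $\tilde\delta'(zw-1) = (t+u)(t-u)-1 = t^2-u^2-1$. Hence $\tilde\delta$ and $\tilde\delta'$ descend to $R$-algebra morphisms $\delta$ and $\delta'$ between the quotients as in the statement. Finally, the compositions $\tilde\delta\circ\tilde\delta'$ and $\tilde\delta'\circ\tilde\delta$ act as the identity on the generators (for instance $z \mapsto t+u \mapsto (z+w)/2 + (z-w)/2 = z$), and since each composition is again a substitution homomorphism of the same type, this forces it to be the identity on the full power series ring, and hence on the quotient. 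This shows $\delta$ and $\delta'$ are mutually inverse $R$-algebra isomorphisms, completing the proof.
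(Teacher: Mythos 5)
Your proposal is correct and takes essentially the same approach as the paper: the paper's proof is just a compressed version of yours, noting that the convergence check is a ``straightforward application of the strong triangle inequality,'' that well-definedness on the quotients follows from substituting into $t^2-u^2-1=(t+u)(t-u)-1$ and $zw-1$, and that the maps are clearly mutually inverse. You have simply supplied the details (the Gauss-norm estimate, the ideal computation, and the generator check) that the paper leaves implicit.
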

\begin{proof} To check that $\delta$ and $\delta'$ take convergent power series to convergent power series is a straightforward application of the strong triangle inequality. One checks that  $\delta$ and $\delta'$  are well defined by substituting the variables in $t^2-u^2-1=(t+u)(t-u)-1$ and in $zw-1$. That $\delta$ and $\delta'$ are inverses of each other is clear.
\end{proof}
By Lemma \ref{LemmaGmQuot}, the map $\delta'$ induces an $R$-isomorphism
$$
d':\Acal_R(\G_m)\to B
$$
given by $z\mapsto t+u$, $z^{-1}\mapsto t-u$, with inverse denoted by $d$. Under these isomorphisms, the automorphism $\epsilon$ on $B$ corresponds to $\tau\in\Aut_R\Acal_R(\G_m)$ defined by $z\mapsto z^{-1}$, namely $\tau d= d\epsilon$. 

Observe that 
$$
d i(\omega)\cdot \tau d i(\omega)=d i(\omega)\cdot d \epsilon i(\omega)=d i(\omega)\cdot  d i \sigma (\omega)=di(\omega\sigma(\omega))=di(1)=1.
$$
Lemma \ref{LemmaNormGm} implies $di(\omega)=\pm z^n$ for some integer $n$, hence $\omega=\pm (t+\alpha)^n$. This concludes the proof of Theorem \ref{ThmPell} when $\cha R\ne 2$


\subsection{Characteristic $2$} 

Now we prove Theorem \ref{ThmPell} when $\cha R= 2$. Again we can assume that $R$ is a complete field.  This time we define $\alpha$ as a solution of $Z^2+tZ+1=0$. One can show that $\alpha\notin \Acal_R$ and similarly as before we let $\sigma \in \Aut_{\Acal_R} \Acal_R[\alpha]$ be defined by $\sigma(\alpha)=t+\alpha$ (that is, we interchange the roots of $Z^2+tZ+1$). 

With this setup, the proof goes exactly as in the previous section, except that the ring $B$ should be defined as
$$
B=\frac{R\{t,u\}}{(u^2+tu+1)R\{t,u\}}
$$
and Lemma \ref{LemmaIso0} should be replaced by 

\begin{lemma} The rule $t\mapsto z+w$, $u\mapsto z$ extends (by substitution) to an $R$-isomorphism
$$
\delta: B \to \frac{R\{z,w\}}{(zw-1)R\{z,w\}}
$$
whose inverse $\delta'$ is determined by $z\mapsto u$, $w\mapsto t+u$.
\end{lemma}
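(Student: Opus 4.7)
The plan is to mirror the proof of Lemma \ref{LemmaIso0} essentially verbatim, changing only the substitution formulas and taking advantage of the peculiarities of characteristic $2$. There are three things to verify: that the substitutions define morphisms on the formal power series rings, that the relevant ideals are respected so the morphisms descend to the quotients, and that the induced maps on the quotients are mutually inverse.

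First, to see that the substitution rule $t\mapsto z+w$, $u\mapsto z$ extends to a well-defined $R$-algebra map $R\{t,u\}\to R\{z,w\}$, I would use the non-Archimedean strong triangle inequality: under any Gauss norm $|\cdot|_r$ on $R\{z,w\}$ one has $|z+w|_r\le\max\{|z|_r,|w|_r\}=r$ and $|z|_r=r$, so that $|(z+w)^m z^n|_r\le r^{m+n}$. Substituting into a convergent series $\sum c_{mn}t^mu^n\in R\{t,u\}$ produces a series whose double-indexed coefficients are bounded by $|c_{mn}|r^{m+n}$, which tends to $0$ as $m+n\to\infty$, and a rearrangement argument gives a well-defined element of $R\{z,w\}$. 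The same reasoning applies to $z\mapsto u$, $w\mapsto t+u$.

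Second, I would check that the ideals match. Applying $\delta$ to the defining relation gives
\[
u^2+tu+1\ \mapsto\ z^2+(z+w)z+1=2z^2+zw+1=zw+1,
\]
where I used $\cha R=2$. Since in characteristic $2$ we have $+1=-1$, the element $zw+1$ equals $zw-1$, so $\delta$ carries $(u^2+tu+1)R\{t,u\}$ into $(zw-1)R\{z,w\}$. Conversely, $\delta'$ sends $zw-1$ to $u(t+u)-1=u^2+tu-1=u^2+tu+1$, so $\delta'$ carries $(zw-1)R\{z,w\}$ into $(u^2+tu+1)R\{t,u\}$. Thus both maps descend to the advertised quotients.

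Third, for the inverse property I would compute the composites on the generators $t,u,z,w$, again using $2=0$: $\delta'\delta(t)=\delta'(z+w)=u+(t+u)=t$, $\delta'\delta(u)=\delta'(z)=u$, and symmetrically $\delta\delta'(z)=\delta(u)=z$ and $\delta\delta'(w)=\delta(t+u)=(z+w)+z=w$. Since $\delta$ and $\delta'$ are $R$-algebra homomorphisms and are continuous with respect to the Gauss norms (by the bounds in the first step), agreement on the generators propagates to arbitrary convergent power series, completing the proof. I do not anticipate any serious obstacle: the only substantive point is the convergence check in the first step, and the characteristic-$2$ identities $2=0$ and $+1=-1$ conspire precisely to make the substitutions $t\mapsto z+w,\ u\mapsto z$ play the role that the averaging map $t\mapsto(z+w)/2,\ u\mapsto(z-w)/2$ played in the previous lemma.
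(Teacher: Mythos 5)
Your proposal is correct and takes essentially the same approach the paper intends: the paper's own treatment simply says to mirror the proof of the characteristic-$\ne 2$ lemma (the convergence check via the strong triangle inequality, the substitution check on the defining relations, and the inverse verification) and "leave the remaining details to the reader." Your computations of $\delta(u^2+tu+1)=2z^2+zw+1=zw+1=zw-1$, of $\delta'(zw-1)=u^2+tu+1$, and of the composites $\delta'\delta=\mathrm{id}$, $\delta\delta'=\mathrm{id}$ (each using $2=0$) are exactly these omitted details, carried out correctly.
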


We leave the remaining details to the reader.


\section{The case of fixed characteristic}\label{SecFixedChar}

 In this section we discuss how to interpret in a positive existential way the ring of rational integers in the $L_t$-structure $\Acal_R$ when $\cha R=p\ge  0$ is given. That is, the interpretation in this section will not be uniform in the characteristic. However, this is enough for proving Theorem \ref{MainThm}, and indeed it is a relevant step in our final goal of interpreting the integers uniformly as we will see in Section \ref{SecFinal}.

Before going to the positive characteristic case, first we prove

\begin{theorem}\label{ThmChar0}
The ring of integers $(\Z;0,1,+,\cdot,=)$ is uniformly positive existentially interpretable in the class of $L_t$-structures $\Acal_R$ as $R$ ranges over the integral domains of characteristic zero with a non-Archimedean absolute value. 
\end{theorem}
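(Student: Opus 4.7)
The approach I propose is to adapt Denef's positive existential interpretation of $\Z$ in $R[t]$ from \cite{Denef0}, as extended to the $\C_p$-analytic setting by Lipshitz--Pheidas \cite{LipPhe}, to the broader setting of $\Acal_R$. The essential input making this work uniformly for all integral domains $R$ of characteristic zero carrying a non-Archimedean absolute value is Theorem \ref{ThmPell}, which guarantees that the Pell equation $X^2 - (t^2-1)Y^2 = 1$ has in $\Acal_R$ only the polynomial solutions $(\pm x_n, y_n)$, $n \in \Z$, independent of the specific $R$.

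First I would fix the domain of the interpretation as the graph of the Pell equation, normalized to remove the sign ambiguity in $X$. Concretely, put
\[
\phi_L(X,Y):\quad X^2 - (t^2-1)Y^2 = 1 \ \wedge\ \exists W\,\bigl(X-1 = (t-1)W\bigr).
\]
Theorem \ref{ThmPell} together with $x_n \equiv 1 \pmod{t-1}$ (Corollary \ref{DenefPell}(7)) then yields $\phi_L^{\Acal_R} = \{(x_n, y_n) : n \in \Z\}$ uniformly in $R$, with the map $\theta_{\Acal_R}\colon (x_n, y_n) \mapsto n$ a bijection onto $\Z$. The constants of $\Z$ are picked out by $\phi_0: X = 1 \wedge Y = 0$ and $\phi_1: X = t \wedge Y = 1$, and equality of integers corresponds to coordinate-wise equality of pairs. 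The ring operations of $\Z$ are encoded through the Pell group law: by Corollary \ref{DenefPell}(3), addition corresponds to the polynomial formula
\[
\phi_+:\quad X_3 = X_1 X_2 + (t^2-1) Y_1 Y_2 \ \wedge\ Y_3 = X_1 Y_2 + X_2 Y_1,
\]
which matches $(x_m, y_m) \ast (x_n, y_n) = (x_{m+n}, y_{m+n})$. For multiplication, I would exploit the composition identity $x_{mn} = x_m(x_n)$: the formula $\phi_\cdot$ asserts the existence of $W$ satisfying $X_3^2 - (X_2^2 - 1) W^2 = 1$ and $Y_3 = W \cdot Y_2$ (so that $(X_3, W) = (x_m(x_n), y_m(x_n))$ is a Pell pair at base $X_2 = x_n$), together with a congruence $X_3 \equiv X_1 \pmod{X_2 - t}$ intended to pin down the index~$m$.

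The main technical hurdle is making the multiplication formula uniquely single out $(x_{mn}, y_{mn})$: the Pell equation based at $x_n$ admits many solutions $\pm x_k(x_n)$, and the congruence modulo $x_n - t$ alone need not force $k = m$ when the indices are comparable to or larger than $n$. This is exactly the difficulty resolved in \cite{Denef0} and \cite{LipPhe} by introducing additional positive existential conditions such as the divisibility $y_m \mid y_{mn}$ (Corollary \ref{DenefPell}(4)) and further normalizations modulo $t-1$. Because Theorem \ref{ThmPell} ensures that the Pell-solution set in $\Acal_R$ coincides with that in $R[t]$, those arguments transfer verbatim to our analytic setting. Finally, since every formula involved lies over the fixed language $L_t$ and makes no reference to $R$, the resulting interpretation is automatically uniform across the entire class of rings under consideration.
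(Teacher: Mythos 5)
Your setup---taking the domain to be the $(x_n,y_n)$ pairs normalized by $x\equiv 1\pmod{t-1}$, defining $0,1$ by the obvious clauses, and encoding addition via the Pell group law (Corollary~\ref{DenefPell}(3))---is correct and is essentially how Denef and the paper proceed. The problem is the multiplication formula, and the gap there is not a minor technicality but an entirely different (and unjustified) route from the one the paper takes.

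Your $\phi_\cdot$ brings in a Pell equation with \emph{variable} base, $X_3^2 - (X_2^2-1)W^2 = 1$, and you argue as though the solutions in $\Acal_R$ of $X^2-(a^2-1)Y^2=1$ for $a = x_n$ are known to be exactly $(\pm x_k(a), y_k(a))$. But Theorem~\ref{ThmPell} only establishes this for the single equation $X^2-(t^2-1)Y^2=1$ (base $a=t$); it says nothing about Pell equations at a general non-constant base, where a priori $\Acal_R$ could contain transcendental entire solutions. In fact this is exactly Problem~\ref{ProblemPell}, which the paper explicitly poses as open once $R$ carries a non-trivial non-Archimedean absolute value. So the assertion that ``Theorem~\ref{ThmPell} ensures that the Pell-solution set in $\Acal_R$ coincides with that in $R[t]$, [so] those arguments transfer verbatim'' is false for the base-$x_n$ equation your $\phi_\cdot$ invokes. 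The paper avoids this entirely: in characteristic zero, Denef's interpretation never touches the general Pell equation. Instead it exploits the identity $y_n(1)=n$ (valid only in characteristic zero), so that multiplication of indices is captured by the single positive existential clause $y_{m}\,y_{n}\equiv y_{k}\pmod{t-1}$ (together with the Pell predicate on each pair). That congruence, evaluated at $t=1$, reads $mn=k$ in $R$, which forces $mn=k$ in $\Z$ since $\Z$ embeds in $R$. This is the argument of \cite{Denef0}, carried over to $\Acal_R$ the moment Theorem~\ref{ThmPell} pins the base-$t$ analytic Pell solutions down to polynomials; it requires no composition identity and no general Pell equation. Your multiplication step should be replaced by this congruence formula.
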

\begin{proof}
By Theorem \ref{ThmPell}, the argument of \cite{Denef0} applies directly. Indeed, this is the same way to proceed as in \cite{LipPhe}, once one knows that the analytic solutions of the Pell equation $X^2-(t^2-1)Y^2=1$ are exactly the same polynomial solutions from \cite{Denef0}. In a nutshell, one recovers the integers as the values $y_n(1)=n$ (here, $(\pm x_n,y_n)$ are the solutions of the Pell equation).
\end{proof}

When the characteristic is positive, we would like to say that thanks to Theorem \ref{ThmPell} the argument is the same as in \cite{Denefp}, but actually we can't. The reason is that we only considered the Pell equation $X^2-(t^2-1)Y^2=1$ (when $p\ne 2$), but for the argument in \cite{Denefp} to work one would need that the analytic solutions of $X^2-(a^2-1)Y^2=1$ are polynomials whenever $a$ is a non-constant polynomial. We do not know if the latter is true and our techniques from Section \ref{SecPell} do not apply directly to this more general Pell equation. We can circumvent this difficulty by invoking the general results of \cite{PheidasII}.

\begin{theorem}\label{ThmCharp}
Let $p$ be a prime. The ring $(\Z;0,1,+,\cdot,=)$ is uniformly positive existentially interpretable in the class of $L_t$-structures $\Acal_R$ as $R$ ranges over the integral domains of characteristic $p$ with a non-Archimedean absolute value.
\end{theorem}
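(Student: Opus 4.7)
The plan is to apply the main interpretation theorem of Pheidas \cite{PheidasII}, which supplies a positive-existential interpretation of $\Z$ in the polynomial $L_t$-structure $\F_p[t]$ for each fixed prime $p$, after transferring its key ingredient—a positive-existential definition of the $p$-power diagonal $\{t^{p^r} : r \geq 0\}$—from the polynomial setting to the entire-function setting. The transfer is made possible precisely by Theorem \ref{ThmPell}, which rigidifies entire solutions of the basic Pell equation to be polynomial, eliminating the transcendental solutions one might otherwise have to worry about.

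First, I would define positive-existentially the set $P \subset \Acal_R$ of first coordinates of solutions to $X^2 - (t^2-1) Y^2 = 1$. By Theorem \ref{ThmPell} and Corollary \ref{DenefPell}, $P$ equals $\{\pm x_n : n \in \Z\}$, and every element of $P$ is a polynomial with coefficients in $\F_p$. To extract the diagonal $\{t^{p^r}\} = \{x_{\pm p^r}\}$ within $P$, I would use the shift characterization from item 6 of Corollary \ref{DenefPell}: $x_m(t+1) = x_m(t) + 1$ if and only if $m = \pm p^r$. Access to $x_m(t+1)$ is achieved through solutions of the shifted Pell equation $X^2 - ((t+1)^2 - 1) Y^2 = 1$, to which Theorem \ref{ThmPell} applies again (the substitution $t \mapsto t+1$ preserves $\Acal_R$ in the non-Archimedean setting). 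Thus the diagonal set becomes positive-existentially definable inside $\Acal_R$, and Pheidas's machinery then yields the required positive-existential interpretation of $\Z$. Uniformity in $R$ (for fixed $p$) is automatic, because every formula used—those for $P$, those for the shifted Pell equation, and those from Pheidas's interpretation—is an $L_t$-formula whose meaning does not refer to the specific $R$.

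The main obstacle lies in rigorously carrying out the shift step. Given solutions $u = \pm x_m(t)$ of the original Pell equation and $v = \pm x_n(t+1)$ of the shifted one, subject to $v = u + 1$, one must force the indices $m$ and $n$ to match so that item 6 of Corollary \ref{DenefPell} actually applies. A leading-term comparison of the polynomial identity $v - u = 1$ pins down matching degrees and thus $m = n$ in the bulk of cases, but small characteristics (notably $p = 2, 3$) and the ambiguity in signs $\pm$ require careful treatment, possibly by adjoining further Pell-type constraints to rule out the spurious cases. Crucially, this step must be carried out without implicitly invoking the generalized Pell equation $X^2 - (a^2 - 1) Y^2 = 1$ for non-constant $a \in \F_p[t]$, since we do not have control over its solutions in $\Acal_R$—precisely the obstruction that prevents a direct transcription of Denef's argument from \cite{Denefp}.
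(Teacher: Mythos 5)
Your proposal is correct and takes essentially the same route as the paper: reduce to defining a power set via Pheidas's theorem, define the $p$-power diagonal by combining Theorem~\ref{ThmPell} with the shifted Pell equation $X^2-((t+1)^2-1)Y^2=1$ (legitimate because $t\mapsto t+1$ is an automorphism of $\Acal_{\tilde R}$), and invoke item~6 of Corollary~\ref{DenefPell}. The technical worries you raise about sign ambiguity and index matching are precisely what the paper's congruence clauses $f=1+(t-1)h$ and $u=1+tg$ (item~7 of Corollary~\ref{DenefPell}, applied to both the original and shifted equations) resolve, after which Denef's Lemma~2.1 does the rest in the polynomial setting; and one further routine divisibility formula is needed to pass from $\{t^{p^r}\}$ to the full set $\{t^n:n\ge 1\}$ that Pheidas's theorem actually requires.
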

\begin{proof}
It suffices to prove the theorem for the semi-ring $(\N;0,1,+,\cdot,=)$ instead of the ring $(\Z;0,1,+,\cdot,=)$. After analyzing the arguments in \cite{PheidasII}, we see that it suffices to find a positive existential $L_t$-definition of the set
$$
P=\{t,t^2,t^3,t^4,\ldots\} \subseteq \Acal_R
$$
which is uniform on the ring $R$, although it can depend on $p$ (indeed, this is immediate from the main theorem of \cite{PheidasII} if uniformity on $R$ is ignored, but actually for fixed $p$ the formulas in \cite{PheidasII} do not depend on $R$). 

Fix $p>0$. Let us first give a positive existential $L_t$-definition of the set
$$
F=\{t,t^{p^2},t^{p^3},t^{p^4},\ldots\} \subseteq \Acal_R
$$
which is uniform on the ring $R$. 

Assume $p>2$. Consider the formula
$$
\begin{aligned}
\phi(f):\quad \exists y,h,u,v,g, &(f^2-(t^2-1)y^2=1)\wedge (f=1+(t-1)h)\wedge\\
 &(u^2-((t+1)^2-1)v^2=1)\wedge (u=1+tg)\wedge (u=f+1).
\end{aligned}
$$
Theorem \ref{ThmPell} applies to the Pell equation $X^2-((t+1)^2-1)Y^2=1$ (that is, this equation has only polynomial solutions) because the substitution $t\mapsto t+1$ defines an automorphism of $\Acal_{\tilde{R}}$ where $\tilde{R}$ is the completion of $R$. Therefore, we can apply Lemma 2.1 \cite{Denefp} in the special case $a=t$ (specially items 5, 6 and 7) to conclude that the formula $\phi(f)$ defines the set $F$. This gives the desired definition for $F$ in the case $p>2$. When $p=2$ we proceed similarly, but we use Lemma 3.1 from \cite{Denefp} instead (specially items 5 and 6).

Finally, we claim that the formula
$$
\psi(f):\quad \exists h, \phi(h)\wedge (f|h)\wedge (t|f)\wedge (t-1|f-1)
$$
defines $P$ in $\Acal_R$ (here, the symbol $|$ stands for divisibility in $\Acal_R$, which is $L_t$-definable in a uniform positive existential way). Indeed, $\psi(f)$ holds if and only if
\begin{itemize}
\item $h=t^{p^n}$ for some $n \ge 1$, and
\item $f$ divides $t^{p^n}$ in $\Acal_R$, 
\item $t$ divides $f$ in $\Acal_R$, and
\item $f(1)=1$ (this makes sense even if $R$ is not complete, because any $f$ satisfying the second item is a polynomial; see the proof of Corollary \ref{DenefPell}),
\end{itemize}
but these items hold if and only if $f\in P$.
\end{proof}

As the reader can check, the formulas $\phi(f)$ and $\psi(f)$ do not depend on the characteristic when $p>2$. The reason for the previous theorem to be non-uniform in the characteristic is due to the use of Pheidas' results from \cite{PheidasII}. We do not know how to make Pheidas' theorem uniform, and a different approach is necessary. Instead, we will resurrect Denef's approach from \cite{Denefp} in this context, replacing the use of the more general Pell equation $X^2-(a^2-1)Y^2=1$ by an application of B\"uchi's problem.

Although we do not have uniformity in the characteristic at this point, Theorem \ref{ThmChar0} and Theorem \ref{ThmCharp} already prove Theorem \ref{MainThm} thanks to Matijasevich's result \cite{Matijasevich}.


\section{B\"uchi's problem and the $p^r$-th power relation} \label{SecBuchi}

Let us recall the following theorem from \cite{Corr} (see also \cite{PheVid}, \cite{ShlVid} and \cite{PastenWang}). It gives a solution to \emph{B\"uchi's $n$ squares problem} for polynomial rings in positive characteristic.

\begin{theorem}\label{ThmBuchi}
Let $k$ be an algebraically closed field of characteristic $p\ge 17$. Suppose that $u_1,\ldots, u_{17}$ are squares in the polynomial ring $k[t]$ such that at least one of them is not in $k$, and such that for each $1\le n\le 15$ we have $u_{n+2}-2u_{n+1}+u_n=2$. Then there is $v\in k[t]$ and $r\ge 0$ such that for each $n$
$$
u_n=(n+v)^{p^r+1}.
$$
\end{theorem}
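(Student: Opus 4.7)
The plan is to translate the hypotheses into a chain of $abc$-style identities in $k[t]$ and then apply a positive-characteristic Mason--Stothers inequality, which naturally produces the Frobenius exponent $p^r$ appearing in the conclusion.

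First I would take square roots: since $k$ is algebraically closed and $p \ne 2$, for each $n$ choose $f_n \in k[t]$ with $u_n = f_n^2$. The second-difference hypothesis telescopes to $f_{n+1}^2 - f_n^2 = 2n + c$ for some $c \in k[t]$ common to all $n$ in the range; equivalently, $(f_{n+1} - f_n)(f_{n+1} + f_n) = 2n + c$. Rewriting the original relation as $f_{n+2}^2 + f_n^2 - 2 f_{n+1}^2 = 2$ for $1 \le n \le 15$ produces $15$ $abc$-type identities whose right-hand side is the constant $2$. One also has the sanity check that $u_n = (n+v)^{p^r+1}$ is indeed a solution: expanding via the Freshman's dream $(n+v)^{p^r} = n + v^{p^r}$ (valid since $n \in \mathbb{F}_p$) gives $f_{n+1}^2 - f_n^2 = 2n + 1 + v + v^{p^r}$, identifying $c = 1 + v + v^{p^r}$.

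Next I would feed these relations into the positive-characteristic Mason--Stothers theorem. Classically, if $A + B = C$ in $k[t]$ are coprime and at least one has positive degree, then $\max(\deg A, \deg B, \deg C) \le N_0(ABC) - 1$ where $N_0$ counts distinct roots; in characteristic $p$ this fails exactly when $A, B, C$ are all Frobenius pullbacks, and iterating the exception produces a maximal $p^r$-th power structure common to the three terms. Applied to the $abc$-identities coming from three consecutive $f_n$'s, either the degrees of $f_n$ remain small enough that a classical Büchi-type analysis (factoring the scalars $2n+c$ across varying $n$) forces $f_n = n + v$ for a single $v \in k[t]$, or all the relevant factors $f_{n+2} \pm f_n$, $f_{n+1}$ must be $p^r$-th powers times a common factor for some uniform exponent $r$. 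A direct check using the identity for $c$ above then identifies the base as $n + v$ and yields $u_n = (n+v)^{p^r+1}$.

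The main obstacle is the uniformity of the Frobenius exponent $r$ across the 15 coupled relations: a priori each relation could trigger the exceptional case at its own level $p^{r_n}$, and one must argue via degree counts and the shared polynomial $c$ that a single $r$ serves all indices, with a single $v$ matching all of them. This is where the threshold $p \ge 17$ enters; the 17 consecutive squares (giving 15 coupled $abc$-relations) provide just enough overdetermination to pin down $r$ and $v$ simultaneously, whereas smaller primes would leave room for piecewise Frobenius twists that cannot be globally assembled. I expect this bookkeeping to be the most delicate part of the argument, and the reason the concrete bound $17$ (rather than a smaller constant) appears in the statement.
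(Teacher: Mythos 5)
The statement you are proving is not established in this paper at all: the authors simply quote it, prefacing it with ``Let us recall the following theorem from \cite{Corr}.'' It is the Pheidas--Vidaux resolution of B\"uchi's problem for polynomial rings in positive characteristic, and the paper treats it as a black box. So there is no in-paper proof to compare yours against, and the correct reference for the actual argument is \cite{Corr} (with related work in \cite{PheVid}, \cite{ShlVid}, \cite{PastenWang}).

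Evaluated on its own merits, your sketch assembles the right preliminary observations (taking square roots since $k$ is algebraically closed with $p\ne 2$, telescoping the second differences to $f_{n+1}^2 - f_n^2 = 2n + c$ for a single $c\in k[t]$, recognizing that the Frobenius-exception to Mason--Stothers is what should produce the exponent $p^r$, and verifying the identity $c = 1+v+v^{p^r}$ for the expected solution). But the step you defer --- ``one must argue via degree counts and the shared polynomial $c$ that a single $r$ serves all indices, with a single $v$ matching all of them'' --- \emph{is} the theorem; you are not proposing a mechanism for it, you are restating what needs to be proved. The obstacles are concrete. The three terms in $f_{n+2}^2 - 2f_{n+1}^2 + f_n^2 = 2$ need not be pairwise coprime, so Mason--Stothers does not apply directly, and after extracting gcd's the surviving inequality degrades. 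In characteristic $p$ the ``exceptional'' side of Mason--Stothers only tells you that a specific combination is a $p$-th power, not that $f_{n+2}\pm f_n$, $f_{n+1}$ themselves are $p^{r_n}$-th powers, and different $n$ a priori yield incompatible twists; there is no obvious degree bookkeeping that forces a common $r$ across 15 separate relations. Finally, the role of the threshold $p\ge 17$ is not ``overdetermination'' in the sense you describe: it is needed to guarantee that certain small integer coefficients arising in the analysis do not vanish modulo $p$ (this is orthogonal to the count of $17$ terms, which controls the number of second-difference relations available). The published proof in \cite{Corr} resolves all of this by adapting Vojta-style higher-order intersection estimates from the characteristic-zero argument of \cite{PheVid}, a much heavier apparatus than a one-shot application of Mason--Stothers. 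As written, your sketch cannot be completed without importing essentially all of that machinery, so it contains a genuine gap rather than an alternative route.
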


B\"uchi's problem was originally formulated by J. B\"uchi on $\Z$, and remains open. It states the following:
\begin{problem} There is a constant $M$ with the following property:

Suppose that $u_1,\ldots, u_M$ are integer squares such that for each $1\le n\le M-2$ we have $u_{n+2}-2u_{n+1}+u_n=2$. Then there is $v\in \Z$ such that $u_n=(n+v)^2$ for each $n$.
\end{problem}
 The original intended application was to obtain a strong form of the negative solution of Hilbert's tenth problem: undecidability for the problem of simultaneous representation of integers by diagonal quadratic forms. See \cite{collected} or \cite{MazurBuchi} for details on B\"uchi's original problem. Subsequent analogues of this problem have found similar applications for diagonal quadratic forms, see for instance \cite{Survey}. The application of Theorem \ref{ThmBuchi} that we give here, however, is different; it is closer to the application given in \cite{PPV}.

Consider the following relation on $\Acal_R$, for $R$ of characteristic $p>0$: we say that $u\ge_p v$ if there is $r\ge 0$ such that $u=v^{p^r}$.

We will use Theorem \ref{ThmBuchi} for expressing the relation $u\ge_p v$ uniformly in the language $L_t$, in very much the same way as in \cite{PPV} (although in \cite{PPV} it was used a symmetrized version of $\ge_p$). Actually we are not able to fully define $\ge_p$ in $\Acal_R$ because we do not know if the analogue of Theorem \ref{ThmBuchi} holds for $\Acal_R$. However, we will be able to partially define $\ge_p$ in a way that is sufficient for our purposes (using the fact that $R[t]\subseteq \Acal_R$). 

\begin{theorem}\label{DefBuchi}
There is a positive existential $L_t$-formula $\beta(x,y)$ with the following property:

Let $R$ be an integral domain of characteristic $p\ge 17$ endowed with a non-Archimedean absolute value. Let $f,g\in \Acal_R$. If actually $f,g\in R[t]$ then the following two items are equivalent
\begin{itemize}
\item[(a)] $\Acal_R$ satisfies $\beta(f,g)$, 
\item[(b)] $f\ge_p g$.
\end{itemize}
\end{theorem}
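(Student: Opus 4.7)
The plan is to build $\beta(x,y)$ around a B\"uchi-type sequence of $17$ squares whose first two entries are forced to equal $yx$ and $(y+1)(x+1)$. Specifically, I take $\beta(x,y)$ to be the positive existential $L_t$-formula asserting the existence of $u_1,\ldots,u_{17},\,w_1,\ldots,w_{17},\,q\in\Acal_R$ satisfying
(i) $u_n=w_n^2$ for $1\le n\le 17$,
(ii) $u_{n+2}-2u_{n+1}+u_n=2$ for $1\le n\le 15$,
(iii) $u_1=yx$,
(iv) $u_2=(y+1)(x+1)$, and
(v) $x=yq$.
The constant second-difference recurrence together with the prescribed initial values (iii) and (iv) uniquely determines $u_n=(n+y-1)(n+x-1)$; clause (v) encodes $y\mid x$, which will be essential to break a symmetry in the converse direction.

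For $(\Leftarrow)$, given $f=g^{p^r}$ in $R[t]$ I take $u_n:=(n+g-1)^{p^r+1}$. Since $p\ge 17$ is odd, $p^r+1$ is even and $w_n:=(n+g-1)^{(p^r+1)/2}\in R[t]$ is a square root of $u_n$. The Frobenius identity $(n+g-1)^{p^r}=n+g^{p^r}-1=n+f-1$ (valid in characteristic $p$ since $n\in\F_p$ satisfies $n^{p^r}=n$) gives $u_n=(n+g-1)(n+f-1)$, so (iii), (iv) hold, (ii) is automatic, and $q:=g^{p^r-1}$ witnesses (v). For $(\Rightarrow)$, suppose $\beta(f,g)$ holds. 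The recurrence combined with $u_1,u_2\in R[t]$ places $u_n\in R[t]$ for all $n$, and the principle used in the proof of Corollary~\ref{DenefPell} (any divisor in $\Acal_R$ of a nonzero polynomial is itself a polynomial) upgrades $w_n$ and $q$ to $R[t]$, so in particular $g\mid f$ in $R[t]$. In the principal case where $gf$ is non-constant, I base-change to the algebraic closure $\bar K$ of the fraction field of $R$ and invoke Theorem~\ref{ThmBuchi} to obtain $v\in\bar K[t]$ and $r\ge 0$ with $u_n=(n+v)^{p^r+1}$. Writing $\alpha=1+v$, the identity $\alpha^{p^r}=1+v^{p^r}$ combined with (iii) and (iv) yields $\alpha\cdot\alpha^{p^r}=gf$ and $\alpha+\alpha^{p^r}=g+f$, so $\{\alpha,\alpha^{p^r}\}$ is the set of roots of $(X-g)(X-f)$ in $\bar K[t]$. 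Either $\alpha=g$ and $\alpha^{p^r}=f$, giving $f=g^{p^r}$ as desired; or $\alpha=f$ and $\alpha^{p^r}=g$, which together with $g\mid f$ yields $f^{p^r}\mid f$ in $R[t]$, forcing $f^{p^r-1}$ (for $r\ge 1$) to be a unit and hence $f,g$ both constant, contradicting non-constancy of $gf$.

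The main technical obstacle is the residual constant case $f,g\in R$: all $u_n=(n+g-1)(n+f-1)$ are then constants in $R$, B\"uchi's Theorem~\ref{ThmBuchi} is silent, and for example if $R$ is algebraically closed then every element is a square, so the formula as stated admits spurious solutions. I plan to handle this by augmenting $\beta(x,y)$ with an additional positive existential conjunct that leverages the distinguished transcendental $t\in\Acal_R$ to force non-constancy of the underlying B\"uchi sequence---for example, by simultaneously requiring an auxiliary B\"uchi sequence on a $t$-twisted pair chosen to preserve Frobenius compatibility $f=g^{p^r}$---or equivalently by taking a disjunction with a separate formula handling the constant case directly. Ensuring that the extra clause does not over-restrict either implication is the subtlest point of the construction, and is where the argument requires ingenuity beyond the direct application of Theorem~\ref{ThmBuchi}.
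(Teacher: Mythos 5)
Your core construction is the right one and essentially coincides with the paper's intermediate formula $\beta'$: the seventeen-term B\"uchi sequence with $u_1=xy$, $u_2=(x+1)(y+1)$, the square witnesses, and the divisibility clause $x=yq$ are all there, and your forward and converse arguments for the non-constant case are correct (including the symmetry-breaking via $g\mid f$ and the degree contradiction when $r\ge 1$ in the wrong branch). But you have correctly diagnosed, and then explicitly left open, the genuine gap: the constant case $f,g\in R$, where Theorem \ref{ThmBuchi} gives you nothing and (when $R$ is, say, algebraically closed) your formula admits spurious solutions. A proof that ends with ``augment by an additional conjunct that leverages $t$ to force non-constancy \dots this is where the argument requires ingenuity'' has not closed the case; it has named the problem without producing the formula or verifying that the fix preserves both implications.

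The paper's resolution of exactly this obstacle is concrete and worth internalizing, because the naive twists one might try (e.g.\ replacing $(x,y)$ by $(tx,ty)$ or $(x+t,y+t)$) destroy the Frobenius compatibility $f=g^{p^r}$. The paper sets
$$
\beta(x,y):\quad \exists u,v,\ (u^2-(t^2-1)v^2=1)\wedge \beta'(u,t)\wedge \beta'(ux,ty)\wedge \beta'(x,y),
$$
using Theorem \ref{ThmPell} to force $(u,v)$ to be a polynomial Pell solution and then $\beta'(u,t)$ (a non-constant instance) to pin down $u=t^{p^r}$. The clause $\beta'(ux,ty)$ is now a non-constant instance even when $f,g\in R$ (unless $f=g=0$, handled trivially), and yields $t^{p^r}f=(tg)^{p^s}$; when $f,g\in R$ a degree count in $t$ forces $r=s$ and hence $f=g^{p^r}$, while if $f$ or $g$ is non-constant one simply falls back on $\beta'(x,y)$. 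In the $(\Leftarrow)$ direction one checks $\beta(g^{p^r},g)$ by taking $u=t^{p^r}$, $v=(t^2-1)^{(p^r-1)/2}$ and noting $ux=(tg)^{p^r}\ge_p tg$. You should supply a clause of this kind and verify both directions of the augmented formula explicitly; as written, your proposal is incomplete precisely at the step you flagged as subtle.
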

\begin{proof}
We claim that the formula 
$$
\begin{aligned}
\beta'(x,y):\quad \exists u_1,\ldots, u_{17},z,&\bigwedge_{n=1}^{15}(u_{n+2}-2u_{n+1}+u_n=2)\wedge (xy=u_1)\wedge \\
&(x+y=u_2-u_1-1)\wedge (x=yz)
\end{aligned}
$$
has the next property: for $\cha R=p\ge 17$, if $f,g\in R[t]$ and $f$ or $g$ is non-constant, then  $\Acal_R$ satisfies $\beta'(f,g)$ if and only if $f\ge_p g$.

Suppose that $f\ge_p g$, say, $f=g^{p^r}$. Taking $u_n=(n-1+g)^{p^r+1}$ and $z=g^{p^r-1}$ we see that $\Acal_R$ satisfies $\beta'(f,g)$. 

Conversely, if $\Acal_R$ satisfies $\beta'(f,g)$ then $u_1=fg$ and $u_2=f+g+u_1+1$ belong to $R[t]$. Solving the second order recurrence imposed by $\bigwedge_{n=1}^{15}(u_{n+2}-2u_{n+1}+u_n=2)$ we can express each $u_n$ in terms of $u_1$ and $u_2$ showing that for each $n$ we have $u_n\in R[t]$. Moreover, since $f$ or $g$ has positive degree so does $u_1$ or $u_2$, hence we can apply Theorem \ref{ThmBuchi} to conclude that there is $v\in k[t]$ and $r\ge 0$ such that for each $n$ we have $u_n=(n+v)^{p^r+1}$ (here, $k$ is the algebraic closure of the fraction field of $R$). One can directly check that the pairs $(v+1,(v+1)^{p^r})$ and $((v+1)^{p^r},v+1)$ are solutions of the system of equations
$$
\begin{cases}
XY&=u_1\\
X+Y&=u_2-u_1-1
\end{cases}
$$
which has exactly two solutions counting multiplicity, hence, it has no other solutions (the case $r=0$ requires to check that there is just one double solution, which is true because the discriminant of the polynomial 
$$
Z^2-(u_2-u_1-1)Z+u_1=Z^2-((v+2)^2-(v+1)^2-1)Z+(v+1)^2
$$ 
is $0$). However, the pairs $(f,g)$ and $(g,f)$ are also solutions because $\Acal_R$ satisfies $\beta'(f,g)$, hence, $(f,g)=(v+1,(v+1)^{p^r})$ or $(f,g)=((v+1)^{p^r},v+1)$. The clause $(f=zg)$ implies $g|f$ and therefore we conclude $(f,g)=((v+1)^{p^r},v+1)$. Thus $f\ge_p g$.

Finally, we take
$$
\beta(x,y):\quad \exists u,v, (u^2-(t^2-1)v^2=1)\wedge \beta'(u,t)\wedge \beta'(ux,ty)\wedge \beta'(x,y).
$$
Let us check that this formula works. Let $f,g\in R[t]$. 

If (b) holds, say $f=g^{p^r}$, then we take $u=t^{p^r}$ and $v=(t^2-1)^{(p^r-1)/2}$. It is straightforward to check that $\Acal_R$ satisfies $\beta(f,g)$ with these choices of $u,v$ thanks to our previous work on $\beta'(x,y)$, hence (a) holds.

Conversely, if (a) holds then Theorem \ref{ThmPell} implies that $u,v\in R[t]$. Since $u,t\in R[t]$ and $t$ has positive degree, the fact that $\Acal_R$ satisfies $\beta'(u,t)$ implies that $u=t^{p^r}$ for some $r\ge 0$. We observe that $uf$ and $tg$ belong to $R[t]$. If both $f,g$ are $0$ then $f\ge_p g$ immediately, so we can assume that $f$ or $g$ is not $0$. Then at least one of $uf$ or $tg$ has positive degree, and the fact that $\Acal_R$ satisfies $\beta'(uf,tg)$ implies that $uf=(tg)^{p^s}$ for some $s\ge 0$. Hence $t^{p^r}f=t^{p^s}g^{p^s}$ and we want to conclude that (b) holds. If both $f,g$ belong to $R$ then $r=s$ and $f=g^{p^r}$. On the other hand, if $f$ or $g$ has positive degree then the fact that they are polynomials and $\Acal_R$ satisfies $\beta'(f,g)$ implies that $f\ge_p g$. Therefore (b) holds.
\end{proof}


\section{Uniform interpretation of the integers}\label{SecFinal}

\subsection{Outline of the proof of Theorem \ref{MainUnif}} 

In this section we prove our main result, Theorem \ref{MainUnif}. For this it will be useful to introduce a collection of auxiliary structures.

Consider the language $L^*=\{0,1,+,|,|^*,\ne\}$ and for each prime $p$ we define the the $L^*$-structure
$$
\Zfrak_p=(\Z; 0,1,+,|,|^p,\ne)$$ 
where $n|^p m $ means that there is $r \ge 0$ such that $m=\pm p^rn$. 

The proof of Theorem \ref{MainUnif} is done in three steps.
\begin{itemize}
\item[Step 1.] For $p\ge 17$, we give a uniform positive existential interpretation of $\Zfrak_p$ on the class of $L_t$ structures $\Acal_R$ when $\cha R=p$. The key point is that the formulas will be \emph{independent of $p$}. 
\item[Step 2.] Using results of \cite{PPV}, we will deduce that the ring $\Z$ is uniformly positive existentially interpretable in the class of $L_t$-structures $\Acal_R$, for $\cha R\ge 17$.

\item[Step 3.] Finally, we cover the finitely many remaining characteristics using the results of Section \ref{SecFixedChar}.

\end{itemize}

The rest of this section is devoted to these steps.

\subsection{Step 1} We need the next result.

\begin{lemma}\label{Lemmane}
There is a positive existential $L_t$-formula $\nu(x)$ with the following property:

Let $R$ be an integral domain of characteristic $p>0$ with a non-Archimedean absolute value. For all $f\in \F_p[t]\subseteq \Acal_R$ we have that: $\Acal_R$ satisfies $\nu(f)$ if and only if $f\ne 0$.
\end{lemma}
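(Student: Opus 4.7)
The plan is to characterise ``$f\neq 0$'' positively by exhibiting $f$ as a divisor of a manifestly nonzero element of $\Acal_R$, specifically one of the form $t^N(t^M-1)$ with $N,M\geq 1$. Such an element is visibly nonzero, while conversely every nonzero polynomial in $\F_p[t]$ can be written as a divisor of one of this shape.

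The first step is to construct a positive existential $L_t$-formula $\rho(y)$ that defines $P=\{t^n:n\geq 1\}\subseteq \Acal_R$, uniformly in $R$ and in the positive characteristic $p$. I would do this by recycling the construction of $\psi$ from the proof of Theorem~\ref{ThmCharp}: use Theorem~\ref{ThmPell} applied to $X^2-(t^2-1)Y^2=1$ (or, when $\cha R=2$, to $X^2+tXY+Y^2=1$) to isolate a formula $\phi$ whose satisfying set is $\{t^{p^r}:r\geq 0\}$, and then cut $P$ out of the divisors of $\phi$-elements by imposing $t\mid y$ and $(t-1)\mid y-1$. To get uniformity across all $p>0$ I would take the disjunction of the $p=2$ and $p>2$ versions; the non-trivial point is to check that in each characteristic the ``wrong'' Pell equation becomes sufficiently degenerate that no spurious elements creep into the disjunction.

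With $\rho$ in hand I would then set
$$
\nu(f)\ :\ \exists g,y,z\ \bigl(\rho(y)\wedge \rho(z)\wedge fg=yz-y\bigr).
$$
The forward implication is immediate: if $\nu(f)$ holds then $y=t^N$, $z=t^M$ with $N,M\geq 1$, hence $yz-y=t^N(t^M-1)$ is a nonzero element of $\Acal_R$, and since $\Acal_R$ is an integral domain $f\neq 0$.

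For the reverse implication, take $f\in \F_p[t]$ nonzero and factor $f=t^ag$ with $a\geq 0$ and $g\in \F_p[t]$ coprime to $t$. Irreducibles over $\F_p$ are separable, so the radical $g_{\mathrm{rad}}$ has all its roots in $\overline{\F_p}^\times$, and therefore divides $t^L-1$ for $L$ the least common multiple of the multiplicative orders of those roots. Choosing $j$ with $p^j$ larger than the maximum multiplicity of any irreducible factor of $g$, Frobenius in characteristic $p$ gives
$$
g\mid g_{\mathrm{rad}}^{p^j}\mid (t^L-1)^{p^j}=t^{Lp^j}-1.
$$
Setting $N=\max(a,1)$ and $M=Lp^j$, both $\geq 1$, we conclude $f\mid t^N(t^M-1)$, so choosing $g\in \Acal_R$ with $fg=t^N(t^M-1)$ exhibits the required witnesses and $\nu(f)$ holds.

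The main obstacle I expect is step one, namely producing a single formula $\rho$ that defines $P$ correctly in every positive characteristic. The Pell constructions differ between $p=2$ and $p>2$, so merging them into one uniform formula requires a careful case-by-case check that the disjunction does not introduce extraneous elements in either regime; everything after $\rho$ is easy.
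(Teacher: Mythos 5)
Your proof hinges on Step~1: constructing a single positive existential formula $\rho$ that defines $P=\{t^n:n\ge 1\}$ in $\Acal_R$ \emph{for every} positive characteristic. You propose taking the disjunction of the $p=2$ and $p>2$ versions of the $\psi$-formula from the proof of Theorem~\ref{ThmCharp}, and you flag as a ``non-trivial point'' the verification that the wrong Pell equation degenerates so as not to introduce spurious elements. That verification in fact fails, and in the opposite way from what you hope: the Pell equation $X^2-(t^2-1)Y^2=1$ degenerates in characteristic $2$ to $(X+(t-1)Y)^2=1$, which has the huge solution family $X=1+(t-1)Y$ for \emph{arbitrary} $Y\in\Acal_R$. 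Working through the $p>2$ formula $\psi_{>2}$ in characteristic $2$, one finds it is equivalent to the conjunction $t\mid f$ and $(t-1)\mid f-1$; for instance $f=t^2(t^2+t+1)\notin P$ satisfies it. So the disjunction does not define $P$, and your stated forward implication (``if $\nu(f)$ holds then $y=t^N$, $z=t^M$'') is incorrect as written.

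That said, the plan is salvageable, because you do not actually need $\rho$ to define $P$ exactly. For your $\nu$ to work it suffices that the set $S$ cut out by $\rho$ (i) contain $P$ and (ii) exclude $0$ and $1$. Property (i) holds for each characteristic (by the paper's verification of $\psi$), and (ii) holds because both disjuncts force $t\mid y$ and $(t-1)\mid y-1$, which rule out $y=1$ and $y=0$ respectively (the latter because $t-1$ is not a unit in $\Acal_R$). With that weaker property, $y,z\in S$ implies $y\neq 0$, $z\neq 1$, hence $yz-y=y(z-1)\neq 0$, and the forward direction goes through; your divisibility argument for the backward direction using $f\mid t^N(t^M-1)$ is correct. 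So the approach works after the patch, but it is routed through Theorem~\ref{ThmPell}, Theorem~\ref{ThmCharp}, and a case split between characteristic $2$ and $>2$. The paper's proof is far more economical: it takes $\nu(x):\exists a,b,c,\ (ta+1)((t-1)b+1)=xc$, observes that $t$ and $t-1$ are non-units in $\Acal_R$ so the left-hand side is never zero, and for the converse uses B\'ezout in $\F_p[t]$ applied to the coprime pairs $(t,(t-1)^\beta\Gamma)$ and $(t-1,t^\alpha\Gamma)$ coming from the factorization $f=t^\alpha(t-1)^\beta\Gamma$. No Pell equation, no B\"uchi, no characteristic split.
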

\begin{proof}
We claim that the formula
$$
\nu(x): \exists a,b,c,(ta+1)((t-1)b+1)=fc
$$
works. Let us remark that this formula is known to work in the polynomial case, but here the variables under quantifiers range over $\Acal_R$. 

Assume first that $\Acal_R$ satisfies $\nu(f)$, then $ta\ne -1$ and $(t-1)b\ne -1$ because $t$ and $t-1$ are not invertible in $\Acal_R$ (this can be seen in a number of ways, for instance, looking at Newton polygons). Hence $fc\ne 0$ because $\Acal_R$ is an integral domain, and we conclude that $f\ne 0$.

Conversely, assume that $f\ne 0$, where $f\in \F_p[t]\subseteq \Acal_R$. Let us factor $f$ in $\F_p[t]$ as $f=t^\alpha (t-1)^\beta\Gamma$ where $t,t-1$ do not divide $\Gamma$ in $\F_p[t]$. Put $F=(t-1)^\beta \Gamma$ and $G=t^\alpha \Gamma$. Since $(t,F)=1$ in $\F_p[t]$ we see that there are $u,v\in \F_p[t]$ such that $tu+Fv=1$. Similarly, there are $r,s\in \F_p[t]$ such that $(t-1)s+Gr=1$. From these equations we deduce $(-tu+1)(-(t-1)s+1)=FvGr=f\Gamma vr$, so we can take $a=-u$, $b=-s$ and $c=\Gamma vr$ which belong to $\F_p[t]\subseteq \Acal_R$. 
\end{proof}

The goal of the present step is achieved by the following result.

\begin{lemma}\label{MainLemma}
There is a set of positive existential $L_t$-formulas $\Phi=\{\phi_s\}_{s\in L^*\cup \{ L^*\}}$ with the following property:

Let $p\ge 17$. The $L^*$-structure $\Zfrak_p$ is uniformly interpretable by $\Phi$ in the class of $L_t$ structures $\Acal_R$ as $R$ ranges over all the integral domains of characteristic $p$ endowed with a non-Archimedean absolute value.
\end{lemma}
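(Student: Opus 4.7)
The plan is to parametrize $\Z$ inside $\Acal_R$ using solutions of the fixed, $p$-independent Pell equation $X^2 - (t^2-1)Y^2 = 1$, and then read off each symbol of $L^*$ from the arithmetic of the Pell solutions using only formulas that do not mention $p$. By Theorem \ref{ThmPell} the solutions in $\Acal_R$ are precisely $(\pm x_n, y_n)$ for $n \in \Z$, and the congruence $x_n \equiv 1 \pmod{t-1}$ from item 7 of Corollary \ref{DenefPell} singles out the sign-branch $(+x_n, y_n)$ (indeed, since $p>2$ we have $-x_n \not\equiv 1 \pmod{t-1}$). I would therefore take the universe of the interpretation to be the pairs $(x,y)$ cut out by
$$
\phi_{L^*}(x,y):\ \exists z,\ \bigl( x^2 - (t^2-1) y^2 = 1\bigr) \wedge \bigl(x = 1 + (t-1) z\bigr),
$$
whose realization is exactly $\{(x_n, y_n) : n \in \Z\}$. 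The map $\theta_R(x_n, y_n) := n$ is bijective onto $\Z$: the degree equality $\deg x_n = |n|$ separates pairs with $|n| \neq |m|$, and $y_n = -y_{-n} \neq 0$ for $n \neq 0$ separates $n$ from $-n$.

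For the remaining symbols (with $\phi_{L^*}$ tacitly conjoined on each argument tuple), $\phi_0$ cuts out $(1,0)=(x_0, y_0)$ and $\phi_1$ cuts out $(t,1)=(x_1, y_1)$. Addition is encoded by the polynomial identities of item 3 of Corollary \ref{DenefPell},
$$
x_{m+n} = x_m x_n + (t^2-1) y_m y_n, \qquad y_{m+n} = x_m y_n + x_n y_m,
$$
both of which are atomic after introducing the Pell pairs. Integer divisibility is handled by item 4: $m \mid n$ in $\Z$ iff $y_m \mid y_n$ in $\Acal_R$, which is positive existential via an auxiliary multiplier (the case $m = 0$ is automatic because $y_0 = 0$). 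All of these formulas are manifestly independent of $p$.

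The two genuinely delicate symbols are $|^p$ and $\neq$. For $|^p$ I would use the formula $\beta(x_2, x_1)$ provided by Theorem \ref{DefBuchi}: since $x_n \in \F_p[t] \subseteq R[t]$ and $x_{n p^r} = x_n^{p^r}$ by item 5 of Corollary \ref{DenefPell}, together with $x_n = x_{-n}$, the polynomial identity $x_m = x_n^{p^r}$ holds if and only if $m = \pm p^r n$, which is precisely $n \,|^p\, m$. For $\neq$ I would use the $p$-independent formula $\nu$ of Lemma \ref{Lemmane}: since $(x_n, y_n) \in \F_p[t]^2$ by Theorem \ref{ThmPell}, the disjunction $\nu(x_1 - x_2) \vee \nu(y_1 - y_2)$ correctly expresses that the pairs differ. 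The hypothesis $p \ge 17$ enters only through the invocation of Theorem \ref{DefBuchi}, and this is exactly the main obstacle: the uniform definability of the $|^p$ relation depends on the B\"uchi $17$-squares theorem, which is also why the remaining small characteristics must be dealt with separately in Step 3 using Theorem \ref{ThmCharp}.
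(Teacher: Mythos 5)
Your proposal is correct and takes essentially the same approach as the paper: you use the same universe formula cut out by the Pell equation together with the congruence $x\equiv 1\pmod{t-1}$ (using $p>2$ to kill the $-x_n$ branch), define $0,1,+,|$ via the identities of Corollary \ref{DenefPell}, define $|^p$ via $\beta$ from Theorem \ref{DefBuchi} combined with items $5$ and $1$ of that corollary, and define $\ne$ via $\nu$ from Lemma \ref{Lemmane} together with the fact that the $x_n,y_n$ lie in $\F_p[t]$. The only cosmetic difference is that the paper also lists an explicit formula $\phi_=$ for equality.
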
 
\begin{proof}
First we list the formulas of $\Phi$:
$$
\begin{aligned}
\phi_{L^*}(x,y):\quad &\exists z, (x^2-(t^2-1)y^2=1)\wedge(x=1+(t-1)z)\\
\phi_0(x,y):\quad &x=1\wedge y=0\\
\phi_1(x,y):\quad &x=t\wedge y=1\\
\phi_+(x,y,u,v,f,g):\quad &\phi_{L^*}(x,y)\wedge \phi_{L^*}(u,v)\wedge\\
   &(f=xu+(t^2-1)yv)\wedge(g=xv+yu)\\
\phi_|(x,y,u,v): \quad &\exists z, \phi_{L^*}(x,y)\wedge \phi_{L^*}(u,v)\wedge(v=yz)\\
\phi_{|^*}(x,y,u,v): \quad &\phi_{L^*}(x,y)\wedge \phi_{L^*}(u,v)\wedge \beta(u,x)\\
\phi_{=}(x,y,u,v): \quad &\phi_{L^*}(x,y)\wedge \phi_{L^*}(u,v)\wedge(x=u)\wedge(y=v)\\
\phi_{\ne}(x,y,u,v): \quad &\phi_{L^*}(x,y)\wedge \phi_{L^*}(u,v)\wedge (\nu(x-u)\vee \nu(y-v))
\end{aligned}
$$
with $\beta(x,y)$ as in Theorem \ref{DefBuchi}, and $\nu(x)$ as in the previous lemma. 

Let us check that these formulas work. By Theorem \ref{ThmPell} and Item 7 of Corollary \ref{DenefPell}, we see that for each $R$ as in the statement we have
$$
\phi_{L^*}^{\Acal_R}=\{(x_n,y_n) : n\in \Z, x_n+ y_n\sqrt{t^2-1}=(t+\sqrt{t^2-1})^n\}.
$$
Then we take 
$$
\theta_{\Acal_R}:\phi_{L^*}^{\Acal_R}\to \Z=|\Zfrak_p|,\quad (x_n,y_n)\mapsto n.
$$
Finally, $\phi_0$ and $\phi_1$ work because $(x_0,y_0)=(1,0)$ and $(x_1,y_1)=(t,1)$; $\phi_+$ works by Item 3 of Corollary \ref{DenefPell}; $\phi_|$ works by Item 4 of Corollary \ref{DenefPell}; $\phi_{|^*}$ works by Theorem \ref{DefBuchi} along with Item 5 of Corollary \ref{DenefPell}; $\phi_{=}$ works because $\theta_{\Acal_R}$ is bijective, and $\phi_{\ne}$ works by Lemma \ref{Lemmane} and by the last assertion of Theorem \ref{ThmPell}.
\end{proof}

We remark that the previous proof is the same as the arguments of \cite{Denefp} with respect to $\phi_{L^*}$, $\phi_0$, $\phi_1$, $\phi_+$, $\phi_|$ (of course, after we know Theorem \ref{ThmPell}), but the argument is different for $\phi_{|^*}$. Moreover, for our purposes we also need $\phi_{\ne}$.

\subsection{Step 2} Note that the relation $|_p$ in $\Z$ used in \cite{PPV} is the simetrization of the relation $|^p$ used by us. Hence, $|_p$ can be defined over $L^*$ by 
$$
x|_p y \mbox{ if and only if } \Zfrak_p\mbox{ satisfies } x|^*y\vee y|^*x
$$ 
which gives a uniform positive existential $L^*$-definition. Similarly, the relation $T$ on $\Z$ (that is, $\Z -\{-1,0,1\}$) used in \cite{PPV} can be defined over $L^*$ by
$$
T(x)\mbox{ if and only if }\Zfrak_p\mbox{ satisfies }x+1\ne0\wedge x\ne 0\wedge x\ne 1
$$
which gives a uniform positive existential $L^*$-definition. Therefore Lemma \ref{MainLemma} implies that, for each given $p\ge 17$, the structures $\Dfrak_p=(\Z;0,1,+,|,|_p,=,T)$ from \cite{PPV} are uniformly positive existentially interpretable in the class of $L_t$ structures $\Acal_R$ as $R$ ranges over all the integral domains of characteristic $p\ge 17$ endowed with a non-Archimedean absolute value, by a set of formulas $\Phi'$ which is \emph{independent of $p$}.

On the other hand, the ring $(\Z;0,1,+,\cdot,=)$ is uniformly positive existentially interpretable in the structures $\Dfrak_p$ by Theorem 4.3 from \cite{PPV}. Therefore, by the \emph{transitivity property} of uniform interpretability (cf. Section 3.3 \cite{PPV}) we conclude:

\begin{theorem}\label{ThmAlmost}
The ring $(\Z;0,1,+,\cdot,=)$ is uniformly positive existentially interpretable in the class of $L_t$ structures $\Acal_R$ as $R$ ranges over all the integral domains of characteristic $p\ge 17$ endowed with a non-Archimedean absolute value.
\end{theorem}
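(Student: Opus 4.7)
The plan is to assemble Theorem \ref{ThmAlmost} from three ingredients already in place: Lemma \ref{MainLemma}, the machinery of \cite{PPV}, and the transitivity of uniform positive existential interpretability. No new analytic content is needed at this stage; the work has already been done in establishing the Pell rigidity (Theorem \ref{ThmPell}) and the Büchi-based definition of the $p^r$-th power relation (Theorem \ref{DefBuchi}), which together feed into Lemma \ref{MainLemma}.

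First, I would record that Lemma \ref{MainLemma} gives a uniform positive existential interpretation of $\Zfrak_p=(\Z;0,1,+,|,|^p,\ne)$ in $\{\Acal_R:\cha R=p, \ p\geq 17\}$, with a set of defining formulas $\Phi$ that does \emph{not} depend on $p$. This uniformity across characteristics is crucial and is precisely the payoff of the careful design of $\beta(x,y)$ in Theorem \ref{DefBuchi} and of $\nu(x)$ in Lemma \ref{Lemmane}.

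Next, I would upgrade this to a uniform positive existential interpretation of the auxiliary structure $\Dfrak_p=(\Z;0,1,+,|,|_p,=,T)$ of \cite{PPV}. For this, observe that over the language $L^*$ of $\Zfrak_p$ one has positive existential definitions, independent of $p$:
\begin{equation*}
x|_p y \iff \Zfrak_p\models (x|^*y) \vee (y|^*x), \qquad T(x) \iff \Zfrak_p\models (x+1\ne 0)\wedge (x\ne 0)\wedge (x-1\ne 0),
\end{equation*}
where $x-1\ne 0$ is rewritten positive existentially as $\exists z,\ x=z+1\wedge z\ne 0$, say. Composing these formulas with the interpretation of $\Zfrak_p$ from Lemma \ref{MainLemma} yields a set of positive existential $L_t$-formulas $\Phi'$, again independent of $p\geq 17$, that uniformly interprets $\Dfrak_p$ in the class of $L_t$-structures $\{\Acal_R:\cha R=p\geq 17\}$.

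Finally, Theorem 4.3 of \cite{PPV} provides a uniform positive existential interpretation of the ring $(\Z;0,1,+,\cdot,=)$ in the family $\{\Dfrak_p\}_p$. Combining this with the previous step via the transitivity property of uniform positive existential interpretation (Section 3.3 of \cite{PPV}) produces the desired uniform interpretation of $(\Z;0,1,+,\cdot,=)$ in $\{\Acal_R:\cha R\geq 17\}$. The only delicate point in this composition is to check that the $p$-independence of $\Phi'$ matches the $p$-independence of the interpretation in Theorem 4.3 of \cite{PPV}, so that the composed formulas remain independent of $R$ (and in particular of $p$); this is immediate from how transitivity is formulated. No single step is a serious obstacle here: the main conceptual work is absorbed into Lemma \ref{MainLemma}, and the remainder is a clean formal assembly.
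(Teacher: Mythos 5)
Your proposal is correct and follows essentially the same route as the paper: pass from $\Zfrak_p$ to $\Dfrak_p$ by giving $p$-independent positive existential $L^*$-definitions of $|_p$ (as the symmetrization of $|^*$) and of $T$, then compose with Theorem 4.3 of \cite{PPV} using transitivity of uniform interpretation. The only cosmetic difference is that the paper writes the third clause of $T$ directly as $x\ne 1$ (an atomic $L^*$-formula, since $1$ and $\ne$ are in $L^*$) rather than introducing $x-1\ne 0$ and then rewriting; this changes nothing.
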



\subsection{Step 3 and conclusion of the proof} Consider for each $p$ the class of $L_t$ structures $\Ucal_p$ that consists of all $\Acal_R$ with $\cha R=p$. The positive existential $L_t$-formula
$$
\kappa_p:\quad \underbrace{1+1+\cdots+1}_{p\, \mathrm{times}}=0
$$
is satisfied by each structure in $\Ucal_p$, and it is false in each $\Acal_R$ with $\cha R\ne p$. On the other hand, consider the class of $L_t$-structures $\Ucal_{\ge p}$ which is the union of the $\Ucal_q$ for all primes $q\ge p$. For $p>2$ given let $p_1,\ldots, p_r$ be the primes smaller than $p$. The positive existential $L_t$-formula
$$
\kappa_{\ge p}:\quad \exists z_1,\ldots,z_r, \bigwedge_{j=1}^r \underbrace{z_j+z_j+\cdots+z_j}_{p_j\, \mathrm{times}}=1
$$
is satisfied by each structure in $\Ucal_{\ge p}$, and it is false in each $\Acal_R$ with $0<\cha R< p$.

Let $\Ucal$ be the class of $L_t$-structures $\Acal_R$ where $R$ ranges over all integral domains $R$ of positive characteristic endowed with a non-Archimedean absolute value. Note that $\Ucal$ is the union of $\Ucal_2$, $\Ucal_3$, ..., $\Ucal_{13}$, $\Ucal_{\ge 17}$. By Theorem \ref{ThmCharp} we know that the ring $(\Z;0,1,+,\cdot,=)$ is uniformly positive existentially interpretable in each of $\Ucal_2$, $\Ucal_3$, ..., $\Ucal_{13}$, while Theorem \ref{ThmAlmost} shows that $(\Z;0,1,+,\cdot,=)$ is uniformly positive existentially interpretable in $\Ucal_{\ge 17}$. The formulas $\kappa_2$, ..., $\kappa_{13}$, $\kappa_{\ge 17}$ allow us to apply Fact 3.5 from \cite{PPV}, and we conclude that $(\Z;0,1,+,\cdot,=)$ is uniformly positive existentially interpretable in $\Ucal$. This concludes the proof of Theorem \ref{MainUnif}.
\section{Open problems}\label{SecOpen}

In this section we briefly present two open problems naturally related to the topics discussed in this work.

\subsection{The Archimedean case} After Theorem \ref{MainThm}, the only case of Hilbert's tenth problem (with coefficients in $\Z[t]$) for entire analytic functions in one variable  that remains unsolved is the Archimedean case. In particular, the following problem is open (with the notation introduced in Section \ref{SecIntro}).

\begin{problem} Is the positive existential theory of the ring of complex entire functions $\Hcal$ over the language $L_t$ decidable?
\end{problem}

For additional information on this problem, see \cite{PhZaSurvey} (specially sections 6 and 8) and the references therein.

\subsection{More general Pell equations} The Pell equation $X^2-(t^2-1)Y^2=1$ plays a central role in our arguments. In the case of entire functions in characteristic zero, analyzing this equation is enough for solving Hilbert's tenth problem (see \cite{LipPhe} or Theorem \ref{ThmChar0}) in the same way as in the work of Denef for polynomials in characteristic zero \cite{Denef0}. For the case of positive characteristic discussed in the present work, this equation is not enough for applying Denef's argument for polynomials in positive characteristic \cite{Denefp}, but we found a different approach without considering more general Pell equations. Nevertheless, the following problem is interesting on its own right:

\begin{problem}\label{ProblemPell}
Suppose that $\cha R\ne 0$. Let $x_n(t),y_n(t)\in R[t]$ be the polynomials defined by $x_n+\sqrt{t^2-1}y_n=(t+\sqrt{t^2-1})^n$. Let $f\in \Acal_R$ be non-constant. Is it true that the only solutions of $X^2-(f^2-1)Y^2=1$ in $\Acal_R$ are $(\pm x_n(f),y_n(f))$?
\end{problem}

A similar question can be asked when $\cha R=2$. This problem is solved (affirmatively) when $R$ has the trivial valuation, that is, when $\Acal_R=R[t]$ (see \cite{Denefp, PheidasZahidi}). In the Archimedean case $R=\C$ this problem has a negative answer already when $f=t$ (for instance, see Lemma C.6 in \cite{PhZaSurvey}). To the best of our knowledge, Problem \ref{ProblemPell} is open in all other cases, namely, when $R$ has a non-trivial non-Archimedean absolute value.

If Problem \ref{ProblemPell} has a positive answer for suitable $R$, then the arguments of \cite{PheidasZahidi} could be adapted to show undecidability of the positive existential theory of $\Acal_R$ over the `geometric' language $L_T=\{0,1,+,\cdot,=,T\}$, where $T(f)$ means `$f$ is non-constant'.

\section{Acknowledgments}

This project initiated during an informal seminar held by the authors at Queen's University in August 2013, in preparation for the AIM workshop \emph{Definability and Decidability Problems in Number Theory}, September 2013, Palo Alto. Most of the results were proved soon after the workshop, and the authors were greatly benefited from discussions with other participants. It is our pleasure to thank the American Institute of Mathematics and the NSF for their generous support, and the organizers of this workshop for the wonderful job that they did. 

We also thank Thanases Pheidas, Bjorn Poonen and Xavier Vidaux for their comments and suggestions on preliminary versions of this manuscript. 

Most of this work was done while both authors were graduate students at Queen's University. We thank the Department of Mathematics and Statistics at Queen's University for their support.

N. G.-F. was partially supported by a Becas Chile Scholarship and H. P. was partially supported by an Ontario Graduate Scholarship and by a Benjamin Peirce Fellowship.


\end{document}